\newlist{myEnumItem}{enumerate}{4}
\setlist[myEnumItem,1]{label=\arabic*}
\setlist[myEnumItem,2]{label=\arabic{myEnumItemi}.\arabic*}
\setlist[myEnumItem,3]{label=\arabic{myEnumItemi}.\arabic{myEnumItemii}.\arabic*}
\setlist[myEnumItem,4]{label=\arabic{myEnumItemi}.\arabic{myEnumItemii}.\arabic{myEnumItemiii}.\arabic*}
\newcommand\vref{v^{\text{ref}}}
\newcommand\rhomax{\rho^{\text{max}}}
\newcommand\de{\textit{D}}
\newcommand\su{\textit{S}}
\newcommand\kmh{\frac{\text{km}}{\text{h}}}
\newcommand\ckm{\frac{\text{cars}}{\text{km}}}
\newcommand\ch{\frac{\text{cars}}{\text{h}}}
\renewcommand\P{P}
\newcommand\p{z}
\newcommand\Qt{{\tilde{Q}}}
\newcommand\Pt{{\tilde{P}}}
\newcommand\St{{\tilde{\Sigma}}}
\newcommand\qat{{\tilde{q_1}}}
\newcommand\qbt{{\tilde{q_2}}}
\newcommand\Qtt{\tilde{\tilde{Q}}}
\newcommand\Ptt{\tilde{\tilde{P}}}
\newcommand\Stt{\tilde{\tilde{\Sigma}}}
\newcommand\qatt{\tilde{\tilde{q_1}}}
\newcommand\qbtt{\tilde{\tilde{q_2}}}
\newcommand\Qla{{Q_\lambda}}
\newcommand\Pla{{P_\lambda}}
\newcommand\Qmu{{Q_\mu}}
\newcommand\las{{\lambda^*}}
\newcommand\Plas{{P_\las}}
\newtheorem{thm}{Theorem}[section]
\newtheorem{rem}[thm]{\bf Remark}
\newtheorem{pro}[thm]{\bf Proposition}
\newtheorem{definition}{Definition}
\newenvironment{Proofc}[1]{\smallskip\par\noindent\textbf{#1}\quad}%
  {\hfill$\Box$\bigskip\par}
\newenvironment{proof}{\begin{Proofc}{Proof}}{\end{Proofc}}
\newcommand{\R}{\mathbb{R}}
\title{{P}areto-optimal coupling conditions for \\ the {A}w-{R}ascle-{Z}hang traffic flow model at junctions} 
\author{ 
Oliver Kolb\footnotemark[7], \; 
Guillaume Costeseque\footnotemark[1], \;
Paola Goatin\footnotemark[1], \; 
Simone G\"ottlich\footnotemark[7]}
\begin{document}

\maketitle

\begin{abstract}
This article deals with macroscopic traffic flow models on a road network.
More precisely, we consider coupling conditions at junctions for the Aw-Rascle-Zhang second order model 
consisting of a hyperbolic system of two conservation laws.
These coupling conditions conserve both the number of vehicles and the mixing of Lagrangian attributes of traffic through the junction.
The proposed Riemann solver is based on assignment coefficients, multi-objective optimization of fluxes and priority parameters.
We prove that this Riemann solver is well-posed in the case of special junctions, 
including 1-to-2 diverge and 2-to-1 merge.
\end{abstract}

{\bf Keywords.} Traffic flow, second order model, coupling conditions, Riemann solver, junction.

{\bf AMS Classification.} 90B20, 35L65

\section{Introduction}

\subsection{Motivation}

The mathematical modeling of road traffic flows on networks has attracted an impressive interest in the community of
nonlinear Partial Differential Equations (PDE) over the last decades.
Ranging from Hamilton-Jacobi equations to hyperbolic systems of conservation laws, this field appears to be really mature.
A good review of the most recent contributions can be found in~\cite{bressan2014flows}.
The interested reader is also referred to the book~\cite{garavello2016models} and references therein.

In this work, we are interested in proposing a Riemann solver for a second order traffic flow model at junctions
and we aim at studying its well-posedness properties.
We make the usual distinction between \emph{first order models} such as 
the seminal Lighthill-Whitham-Richards (LWR) model~\cite{LighthillWhitham,richards1956shock}
that consists of a scalar conservation law for the traffic density $\rho(t,x)$, 
$t \geq 0$ and $x \in \R$ standing for the time and space variables,
\begin{equation}
\label{eq:LWR}
\partial_t \rho + \partial_x \left( \rho V(\rho) \right) = 0,
\end{equation}
and \emph{second order models} that read as a hyperbolic system of conservation laws.
Here we consider the Aw-Rascle-Zhang (ARZ) model~\cite{AwRascle2000,zhang2002non}
which combines a first conservation law for the density and another one for the mean traffic speed 
(see~\eqref{eq:ARZ} below).
In the LWR model~\eqref{eq:LWR}, the traffic speed is supposed to be always in equilibrium, i.e.\ $v = V(\rho)$, depending only on the traffic density.
The ARZ model has the advantage over the LWR model to account for observed traffic phenomena
that are due to transient traffic states
such as the capacity drop downstream a merge.
The ARZ model has already been studied in a previous article~\cite{kolb2016capacity}, where
traffic control strategies including ramp metering and variable speed limits were implemented.

To set an appropriate Riemann solver at the junction, we need to determine \emph{coupling conditions}
between incoming and outgoing roads in order to ensure the conservation of mass and momentum flows.

\newpage

\subsection{Setting}

\subsubsection{Junction}
Due to finite wave propagation speed, it is not restrictive to study a single junction.
We define a \emph{junction} $J$ as the set of $n$ incoming and $m$ outgoing branches
that meet at a single point (namely the \emph{junction point} supposed to be located at $x = 0$)
such that $J = \displaystyle \bigcup_{i = 1}^{n+m} J_i \cdot e_{i}$
where $e_i \in \R^{n+m}$ are unit vectors 
and the branch $J_{i}$ for any $i \in \{1,\ldots,n+m\}$ 
is defined as follows:
$$ J_{i} := \begin{cases}
]-\infty, 0[, \quad &\mbox{for any} \quad i = 1, \ldots, n, \\
]0,+\infty[, \quad &\mbox{for any} \quad i = n+1, \ldots, n+m.
\end{cases} $$

In the remaining, we will mainly focus on the cases of a 1-to-1 junction ($n=m=1$),
a merge ($n=2$ and $m=1$) and a diverge ($n=1$ and $m = 2$).

\subsubsection{Road dynamics}
We briefly recall the Aw-Rascle-Zhang equations~\cite{AwRascle2000,zhang2002non} 
and explain how they can be applied in the context of networks.
The model consists of a $2\times2$ system of conservation laws for the density and the velocity.
Notice that we do not include the relaxation term as proposed in~\cite{Greenberg2001}. However, it is noteworthy that adding a source term into our model will not influence the definition of solutions to Riemann problems at junctions.

For the description of the traffic, we introduce the density $\rho_i=\rho_i(t,x)$
and the speed of vehicles $v_i=v_i(t,x)$ on each road ${i}$
at position $x \in J_{i}$ and time $t >0$. 
We also define the flow on road ${i}$ as
$$ q_i := \rho_i v_i,  \qquad i = 1, \ldots, n+m. $$

Then, the ARZ model on the junction $J$ reads as follows
\begin{equation} \label{eq:ARZ}
\begin{cases}
\partial_t \rho_{i} + \partial_x \left( \rho_{i} v_{i} \right) = 0, \\
\partial_t \left( \rho_{i} w_{i} \right) + \partial_x \left( \rho_{i} v_{i}  w_{i} \right) = 0, \\
w_i := v_i + p_i (\rho_i),
\end{cases} \quad \mbox{on} ~ ]0,+\infty[\, \times J_{i}, \quad i = 1, \ldots, n + m, 
\end{equation}
where $p_i(\rho)$ is a known pressure function satisfying $p_i'(\rho) > 0$ and
$\rho p''_i(\rho) + 2p_i'(\rho)>0$ for all $\rho.$
The first condition guarantees that $p_i : \rho \mapsto p_i(\rho)$ is invertible
while the latter condition ensures that the curve $\{w_i(\rho,v) = v + p_i(\rho) = c\}$ for any constant $c > 0$ 
is strictly concave in the $(\rho,\rho v)$-plane. Therefore, there exists a unique sonic point $\sigma_i(c)$ maximizing the flux $\rho v$ along the curve $\{w_i(\rho,v) = c\}$.
Notice that we also require that $p_i(0) = 0$.

We consider a Cauchy problem by supplementing~\eqref{eq:ARZ} with the initial conditions
\begin{equation} \label{eq:ARZ-Cauchy}
\left( \rho_{i} , v_{i} \right) (0, x) = g_i(x) , 
\quad \mbox{for} \quad x \in J_{i}, \quad i = 1, \ldots, n + m,
\end{equation}
for some functions $g_i : J_{i} \to \R^2$, for any $i \in \{ 1, \ldots, n + m \}$.

In conservative form, setting $y_i := \rho_i w_i$, \eqref{eq:ARZ} boils down to
\begin{equation} \label{eq:conslaw}
\begin{cases}
\partial_t \rho_{i} + \partial_x \left( y_i-\rho_ip_i(\rho_i) \right) = 0, \\
\partial_t y_i + \partial_x \left( \big( y_i - \rho_i p_i(\rho_i) \big) \dfrac{y_i}{\rho_i} \right) = 0, 
\end{cases}
   \quad \mbox{on}~ ]0,+\infty[\, \times J_{i}, \quad i = 1, \ldots, n + m.    
\end{equation}

We briefly recall the eigen-structure of the classical ARZ model (on $J_i$):
\begin{itemize}
\item the eigenvalues are $\lambda_{i,1} = v_{i} - \rho_{i} p'_{i}(\rho_{i})$ and $\lambda_{i,2} = v_{i}$ 
(with $\lambda_{i,1} < \lambda_{i,2}$ whenever $\rho_{i} > 0$),
\item the Riemann invariants are given by $W_{i,1} = v_{i}$ and $W_{i,2} = w_i = v_{i} + p_{i}(\rho_{i})$ such that
$$ \begin{cases}
\partial_t W_{i,1} + \lambda_{i,1} \partial_x W_{i,1} = 0, \\
\partial_t W_{i,2} + \lambda_{i,2} \partial_x W_{i,2} = 0.
\end{cases} $$
\end{itemize}

\begin{rem}[Lagrangian attributes]
\label{rem:lagrangian-attributes}
It is noteworthy that, for smooth solutions, the second equation of the ARZ model~\eqref{eq:ARZ} can be rewritten as 
\[
 \partial_t w_i + v_i \partial_x w_i = 0, 
\]
showing that the Lagrangian attribute $w_i$ is advected with the traffic flow.
This remark is fundamental for deriving coupling conditions on the conservation of the $\left( w_i \right)_{1 \leq i \leq n}$
through the junction point.
\end{rem}

In this paper, we will consider 
the pressure function
\begin{equation}\label{eq:pressure}
 p_i(\rho) =  \frac{\vref_i}{\gamma_i} \left(\frac{\rho}{\rho_i^{\max}}\right)^{\gamma_i}  
\end{equation}
with maximal density $\rho_i^{\max}>0$, 
reference velocity $\vref_i>0$ and $\gamma_i>0$  (see~\cite{HautBastin2007,ParzaniBuisson2012}). 

Note that above and in the following, we use $w_i = w_i(t,x)$ as space and time dependent state variable but also as function, e.g.\ in the form $w_i(\rho,v) = v + p_i(\rho)$. 
Analogously, we will use the notation $v_i(U) = w - p_i(\rho)$ for the velocity of a state $U = (\rho,\rho w)$.

Similar to first order traffic models, we define the demand and supply functions for each road $i$ as follows: 
for a given constant $c$ (corresponding to a fixed value of $w$) we have 
\begin{align}
 \de_i(\rho,c) &= \begin{cases}
                \big( c - p_i(\rho) \big) \rho & \text{if} \ \rho \le \sigma_i(c),\\
                \big( c - p_i(\sigma_i(c)) \big) \, \sigma_i(c) & \text{if} \ \rho \ge \sigma_i(c),
               \end{cases}\label{eq:demand}\\
 \su_i(\rho,c) &= \begin{cases}
                \big( c - p_i(\sigma_i(c)) \big) \, \sigma_i(c) & \text{if} \ \rho \le \sigma_i(c),\\
                \big( c - p_i(\rho) \big) \rho & \text{if} \ \rho \ge \sigma_i(c),
               \end{cases}\label{eq:supply}
\end{align}
where
\begin{equation}
\label{eq:sonic_point}
 \sigma_i(c) = \rho_i^{\max} \left(\frac{c\, \gamma_i}{\vref_i \, (1+\gamma_i)} \right)^{\frac{1}{\gamma_i}}. 
\end{equation}
%
An illustration of the considered demand and supply functions is given in Figure~\ref{fig:demandSupply}.
Supply and demand functions are needed to compute the coupling fluxes at the junction point.



\begin{figure}[hbt]
\begin{center}
\includegraphics[width=0.45\textwidth]{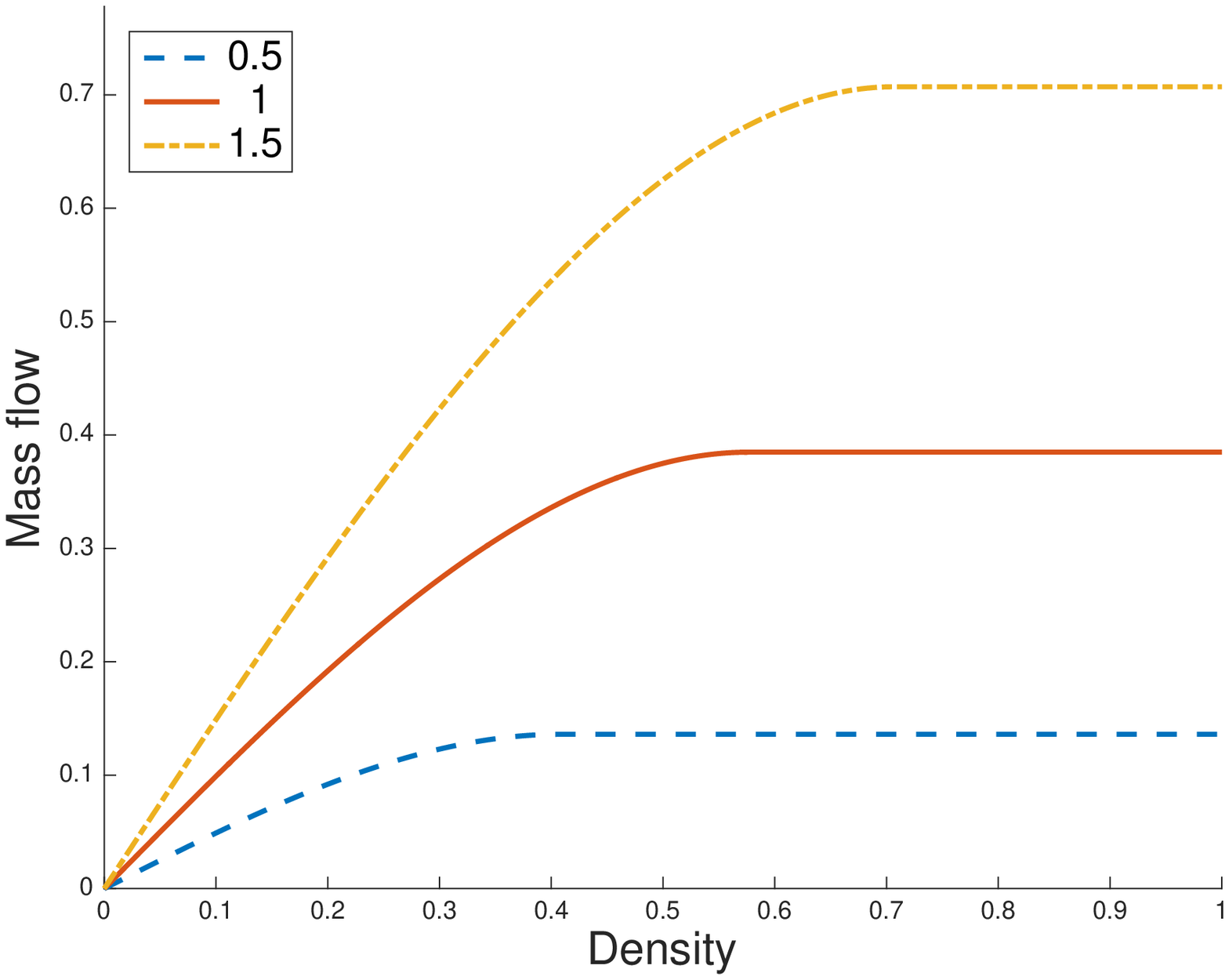}
\includegraphics[width=0.45\textwidth]{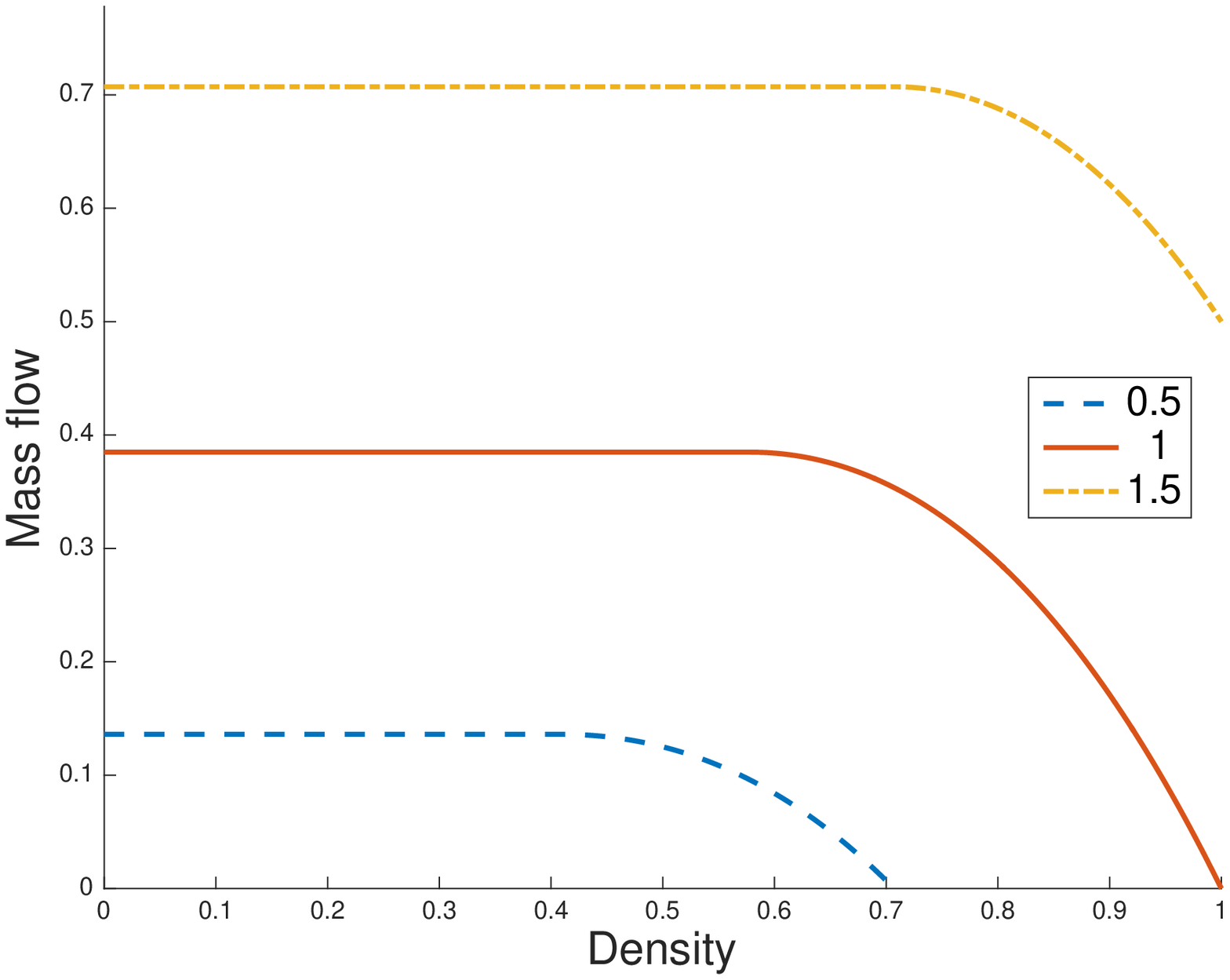}
\caption{Demand (left) and supply functions (right) on a fixed road for $\rhomax=1$, $\vref=2$, $\gamma=2$ and the given values for $c$.}
\label{fig:demandSupply}
\end{center}
\end{figure}

\subsubsection{Problem statement}
Given a junction $J$ with $n$ incoming and $m$ outgoing roads,
we look for a Riemann solver denoted by $\mathcal{RS}$ to solve a Riemann problem posed on this junction.
A Riemann problem for a junction is a generalization of a Riemann problem on a single link, 
i.e., a Cauchy problem~\eqref{eq:ARZ-Cauchy} with constant data on each branch, $g_i (x) = \left( \rho_{i,0} , v_{i,0} \right)$ for any $x \in J_{i}$ and $i \in \{ 1, \ldots, n + m \}$.
More precisely, we assume that
\begin{equation} \label{eq:ARZ-CI}
\left( \rho_{i} , v_{i} \right) (0, x) = \left( \rho_{i,0} , v_{i,0} \right) , 
\quad \mbox{for} \quad x \in J_{i}, \quad i = 1, \ldots, n + m,
\end{equation}
where $\rho_{i,0}$, $v_{i,0} \geq 0$ for any $i \in \{1,\ldots,n+m\}$.

According to the classical approach \cite{GaravelloBook2006}, we define a Riemann solver as follows:

\begin{definition}[Riemann solver at junction]
\label{def:RS}
A Riemann solver $\mathcal{RS}$ of the Riemann problem~\eqref{eq:ARZ}, \eqref{eq:ARZ-CI} on the junction $J$
is a function
$$ \begin{array}{rccc}
\mathcal{RS} : & (\R \times \R)^{n+m} & \longrightarrow & (\R \times \R)^{n+m} \\
 & \left( (\rho_{1,0}, v_{1,0}) , \ldots, (\rho_{n+m,0}, v_{n+m,0}) \right) & \longmapsto & 
 \left( (\hat{\rho}_{1} , \hat{v}_{1}), \ldots, (\hat{\rho}_{n+m} , \hat{v}_{n+m}) \right)
\end{array} $$ 
such that
\begin{enumerate}
\item[(P1)] the waves generated by $\left( \left( \rho_{i,0} , v_{i,0} \right) , \left( \hat{\rho}_{i} , \hat{v}_{i} \right) \right)$ 
have negative speeds for every $i \in \{ 1, \ldots,n \}$ (incoming roads).
This means in particular that $\left( \hat{\rho}_{i} , \hat{v}_{i} \right)$  belongs to the curve of the first family 
passing through $\left( \rho_{i,0} , v_{i,0} \right)$.
\item[(P2)] the waves generated by $\left( \left( \hat{\rho}_{i} , \hat{v}_{i} \right) , \left( \rho_{i,0} , v_{i,0} \right) \right)$ 
have positive speeds for every $i \in \{ n+1, \ldots, n+m \}$ (outgoing roads).
\end{enumerate}
\end{definition}

One important feature of the Riemann solver is the \emph{consistency}, according to the following definition:
\begin{definition}[Consistency of a Riemann solver]
\label{def:consistency}
A Riemann solver $\mathcal{RS}$
is \emph{consistent} if
$$ \mathcal{RS} \left( \mathcal{RS} \left( (\rho_{1,0} , v_{1,0}) , \ldots , (\rho_{n+m,0} , v_{n+m,0}) \right) \right)  
= \mathcal{RS} \left( (\rho_{1,0} , v_{1,0}) , \ldots , (\rho_{n+m,0} , v_{n+m,0}) \right) . $$
\end{definition}

\subsubsection{Assumptions}

For sake of simplicity, we set $q_i := \hat{\rho}_i \hat{v}_i$ the flow at the junction on road $i$
for any $i \in \{ 1, \ldots, n+m \}$.

The Riemann solver will be based on the following assumptions:
\begin{enumerate}[label*=(A\arabic*)]
\item \underline{Mass conservation}: The sum of incoming fluxes equals the sum of outgoing fluxes
$$ \sum_{i=1}^{n} q_{i} = \sum_{j=n+1}^{n+m} q_{j}. $$
\item \underline{Fixed assignment coefficients}: 
there exist some fixed coefficients $\alpha_{ji}$ describing the preferences of the drivers.
These coefficients determine the percentage of the flux which passes from an incoming road $i$ to an outgoing one $j$.
We thus have
$$ q_{j} = \sum_{i=1}^{n} \alpha_{ji} q_{i} $$
with $0 \leq \alpha_{ji} \leq 1$ and $\sum_{j=n+1}^{n+m} \alpha_{ji} = 1$ for any $i \in \{ 1,\ldots, n \}$.
We set $A := \left( \alpha_{ji} \right)_{i,j}$ the matrix of all drivers' preferences.

We define the set of admissible states as 
{ \small
\begin{equation}
\label{eq:set-constraints}
\displayindent0pt
\displaywidth\textwidth
\Omega_{n\times m} := \left\{
(q_1, \ldots, q_n) \in \R^n \ \left|
\begin{array}{cc}
0 \leq q_i \leq \Delta_i,
&\forall i \in \{ 1, \ldots, n \} \\
\displaystyle 0 \leq q_j = \sum_{i=1}^{n} \alpha_{ji} q_i \leq \Sigma_j,
&\forall j \in \{ n+1,\ldots, n+m \}
\end{array}
\right.
\right\}
\end{equation}}
where $\Delta_i$ (resp.\ $\Sigma_i$) stands for the demand (resp.\ supply) on road $i$
defined by the function~\eqref{eq:demand} (resp.~\eqref{eq:supply}).
While the demands are explicitly computed as 
$$\Delta_i := \de_i (\rho_{i,0} , v_{i,0} + p_i (\rho_{i,0})), \quad \mbox{for any} \quad i = 1, \ldots, n,$$
the supplies require more  attention since their construction is implicit.
Indeed, having Remark~\ref{rem:lagrangian-attributes} in mind, the supplies on outgoing roads will depend on the mixture of Lagrangian attributes from incoming roads, i.e., they will depend on the solution $(q_1, \ldots, q_n)$ themselves.
The mechanism to get the correct supplies will be detailed in the different cases we study hereafter.

\item \underline{Multi-objective maximization of the fluxes}: 
the drivers behave such that each incoming flux is maximized
\begin{equation}
\label{eq:Pareto-optimization}
\max_{\Omega_{n\times m}} \left( q_1, \ldots, q_{n} \right).
\end{equation}
\end{enumerate}

\begin{enumerate}[label*=(A\arabic*),resume]
\item\label{A4} \underline{Priorities for the incoming roads}:
for any arbitrarily fixed $\mathbf{\P} = \left( \P_1, \ldots, \P_n \right)$, 
with $\P_i \geq 0$, $i \in \{ 1, \ldots, n \}$ and $\displaystyle \sum_{i=1}^{n} \P_i = 1$, 
the solution of the Riemann solver lies on the Pareto front of the multi-objective maximization problem~\eqref{eq:Pareto-optimization} and
$\frac{q_i}{\sum_{j=1}^{n} q_j}$ is the closest to the given priority parameter $\P_i$
 for any $i = 1, \ldots, n$.
\end{enumerate}

It is noteworthy that the assumptions (A3) together with (A4) are totally new. 
In the literature, one usually assumes either that the sum of the incoming fluxes is maximized (the reader is referred to the book \cite{GaravelloBook2006}) 
or that the solution is given by the projection of a given priority vector on the feasible set (see for instance \cite{garavello2016riemann}).
In the case of general 1-to-$m$ diverges ($n = 1$ and $m \geq 1$),
our multi-objective optimization~\eqref{eq:Pareto-optimization} is equivalent to the maximization of the incoming flux, i.e., $\displaystyle \max_{\Omega_{1 \times m}} q_1$, 
since there is only one incoming road.

\subsection{Review of the literature}

While there is a huge quantity of papers dealing with the first order LWR model on networks
(see the book~\cite{GaravelloBook2006} and the references in~\cite{bressan2014flows}),
there are only a few papers in the literature that propose coupling conditions for second order traffic flow models.
Among those which deal more particularly with the ARZ model, we can highlight \cite{Garavello2006,HautBastin2007,HertyMoutariRascle2006, HertyRascle2006}.

In \cite{Garavello2006}, the authors define several rules for their Riemann solvers
and they prove that these Riemann solvers are well-posed for special junctions.
However, it is noteworthy that in their case 
the ``generalized momentum'' $y := \rho (v + p(\rho))$ is not conserved through the junction.

In the paper \cite{HautBastin2007}, both the mass and the flow momentum are conserved
but the values of $w_i$ for $i = n+1, ... n+m$ are computed as a convex combination 
of the $w_i$ of incoming roads
with respect to the \emph{demands} on the incoming roads.
This solver has been inspired by~\cite{Jin2003} and it has been reused in~\cite{ParzaniBuisson2012,SiebelMauserMoutariRascle2009}.
Such a Riemann solver is known not to be consistent in the sense of Definition~\ref{def:consistency}.
For instance, it suffices to consider a 2-to-1 merge with one incoming road in free-flow phase and the other one in congested situation.

In \cite{HertyRascle2006}, the authors propose the homogenization of an underlying microscopic model to solve the mixing problem of driver behaviors through the junction when there is more than one incoming road.
Differently from~\cite{Garavello2006}, they consider that the quantity $w = v + p(\rho)$ is conserved through the junction.
Moreover, the flux at the junction is not necessarily maximized.
This has been modified in \cite{HertyMoutariRascle2006}.
However both in~\cite{HertyRascle2006} and~\cite{HertyMoutariRascle2006}, due to the homogenization process considered,
the authors introduce a pressure function $\tilde{p}_j(\rho)$ on outgoing roads
that may be different from the initial function $p_j(\rho)$. \\

Other second order traffic models on junctions have been studied in the literature such as the Phase Transition models in~\cite{colombo2010road,garavello2016riemann}.
The Riemann solver in~\cite{garavello2016riemann} is quite similar to the one proposed in~\cite{HertyRascle2006}.
 \\

Finally, in the engineering literature,
a junction model has been proposed in~\cite{lebacque2008intersection}
for models extracted from
the GSOM family \cite{lebacque2007generic, lebacque2008modelisation}
which encompasses a wide range of second order traffic flow models.
The authors assume that all the incoming fluxes mix through a \emph{buffer} at the junction
before exiting on the outgoing roads.
We believe that such a process is tractable for keeping things simple but it looses some information
about the mixture of the incoming $w_i$.

\subsection{Organization of the paper}

In Section~\ref{sec:modelling:coupling} 
we provide coupling conditions for junctions with a single incoming road, namely the simplest spatial discontinuity treated as a 1-to-1 junction and for a 1-to-2 diverge.
We then detail the case of a 2-to-1 merge in Section~\ref{sec:merge-coupling},
which requires more technicalities due to nonlinear constraints in the set of admissible states. Section~\ref{sec:numResults} is devoted to a numerical demonstration of two key features of the proposed merge model.
Finally, we give some concluding remarks in Section~\ref{sec:conclu}.

\section{Coupling conditions for junctions with a single incoming road}
\label{sec:modelling:coupling}

In the following, we present the couplings between roads at the junction point for several types of junction
with $n = 1$ incoming road and $m \geq 1$ outgoing roads. 
The considered coupling conditions can be given in terms of mass flow $q=\rho v$ and ``momentum flow'' $q w$. 
The computation of the actual states at a junction is not necessary.

\subsection{1-to-1 junction}
We model a spatial discontinuity, like for instance a bottleneck, by a 1-to-1 junction, i.e., $n = m = 1$.
This case is interesting because it provides good insights for the construction of the supply function on outgoing roads.

\begin{figure}[htb]
\centering
	\includegraphics[width=0.4\textwidth]{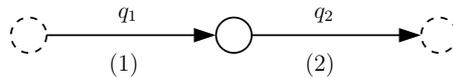}
    \label{fig:1to1}
    \caption{1-to-1 junction.}
\end{figure}

We use index 1 for the incoming road, index 2 for the outgoing road (see Figure~\ref{fig:1to1}), 
and consider the data $U_{i} = (\rho_{i}, \rho_{i} w_{i})$ at the adjacent boundaries of roads 1 and 2, respectively. 

In this very simple case, the assignment matrix is obviously $A = 1$ and the set of admissible states simply reads
$$ \Omega_{1\times 1} = \left\{ q_1 \in \R \ \left| \ \begin{array}{c}
0 \leq q_1 \leq \Delta_1 \\
0 \leq q_2 = q_1 \leq \Sigma_2
\end{array} \right. \right\} $$
where $\Delta_1 = \de_1\big( \rho_{1}, w_{1} \big)$ and $\Sigma_2$ 
denote the demand on the incoming road 
and the supply on the outgoing road, respectively.
The supply is not simply computed with respect to the initial conditions $(\rho_2,w_2)$ on the outgoing road.
Indeed, according to  Definition~\ref{def:RS}, we are looking for waves with positive speeds
on the outgoing road.
It embeds shocks, rarefaction waves and contact discontinuities.
We recall that the $w_i$ are advected with the speed of traffic.
Moreover a contact discontinuity propagates the change from $w_1$ to $w_2$ 
while the speed $v_2$ is a Riemann invariant and remains unchanged across the wave.
We thus compute a \emph{modified} density $\tilde\rho_2$ which is either given by the intersection of the curves $\{ w_2(U) = w_1 \}$ and $\{ v_2(U) = v_2(U_2) \}$ or by $\tilde{\rho}_2 = 0$.
It can be computed as follows:
\[
\tilde\rho_2 :=
p_2^{-1}\left(\max\left\{0,w_1-v_2\right\}\right).
\]
%
%

Flow maximization at the junction over all admissible states leads to
\begin{equation}\label{eq:q1to1}
 q_1 = q_2 = \tilde q = \min\left\{ \de_1\big( \rho_{1}, w_{1} \big),\, \su_2\big( \tilde\rho_2, w_{1} \big) \right\},
\end{equation}
where the functions $\de_1$ and $\su_2$ are defined in~\eqref{eq:demand} and~\eqref{eq:supply}, respectively.
Obviously, the minimum in~\eqref{eq:q1to1} exists and is unique
and thus the Riemann solver is well-posed.

\begin{rem}
The solution~\eqref{eq:q1to1} for a 1-to-1 junction is totally equivalent to the Riemann solvers used for deriving numerical schemes for similar second order traffic flow models, namely the GSOM family in \cite{lebacque2007generic} and the Collapsed Generalized ARZ model in \cite{fan2017collapsed}.
\end{rem}

Note that for the considered pressure functions~\eqref{eq:pressure} the \emph{modified} density $\tilde \rho_2$
can be computed explicitly:
\begin{align}\label{rho:ex}
 \tilde \rho_2 = \rho_2^{\max} \left( \max \left\{ 0, \frac{\gamma_2}{\vref_2} \big(w_{1} - v_2(U_{2})\big) \right\} \right)^\frac{1}{\gamma_2} .
\end{align}
Then, $q_1 = q_2 = \tilde q$ determines the mass flow out of road 1/into road 2.
Further, with $\tilde w = w_{1}$ at the junction, the momentum flow is given by $\tilde q \tilde w$.
The densities $\left( \hat{\rho}_1, \hat{\rho}_2 \right)$ and speeds $\left( \hat{v}_1, \hat{v}_2 \right)$ are defined such that
\[
\hat{\rho}_i \left( w_1 - p_i \left( \hat{\rho}_i \right) \right) = \tilde{q}, \quad \mbox{and} \quad
\hat{v}_i = w_1 - p_i \left( \hat{\rho}_i \right), 
\quad \forall i \in \{ 1, 2\}.
\]

\subsection{Dispersing junction 1-to-2}

We assign index 1 for the incoming road, and indices 2 and 3 for the outgoing roads (see Figure~\ref{fig:1to2}).

\begin{figure}[htb]
\centering
	\includegraphics[width=0.4\textwidth]{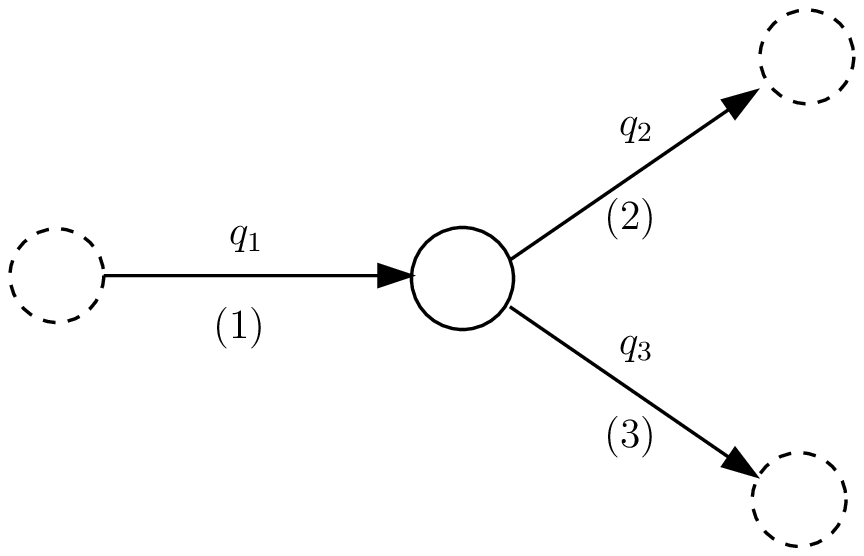}
    \label{fig:1to2}
    \caption{1-to-2 junction.}
\end{figure}

The assignment matrix is given by $A = \left( \alpha, 1-\alpha \right)$
where the distribution rate $\alpha \in \,]0,1[$ describes the proportion of the flow going from road 1 to road 2.
The set of admissible states is given by
$$ \Omega_{1\times 2} = \left\{ q_1 \in \R \ \left| \ \begin{array}{c}
0 \leq q_1 \leq \Delta_1 \\
0 \leq q_2 = \alpha q_1 \leq \Sigma_2 \\
0 \leq q_3 = (1-\alpha) q_1 \leq \Sigma_3
\end{array} \right. \right\} $$
where $\Delta_1 = \de_1\big( \rho_{1}, w_{1} \big)$ stands for the incoming demand and $\Sigma_j$, for $j=2, 3$, 
denote the supplies on both outgoing roads.

We exclude the extreme case of $\alpha=0$ (resp.\ $\alpha=1$) since in this case
we obtain $q_2 = 0$ and $q_1 = q_3$ (resp.\ $q_3 = 0$ and $q_1 = q_2$),
which reduces to a $1\times 1$ junction. In contrast with~\cite{HertyRascle2006}, we define
\begin{equation}
\label{eq:q1-to-2}
\begin{aligned}
 &q_1 = \min \left\{ 
 	\de_1\big( \rho_{1}, w_{1} \big),\, 
 	\dfrac{1}{\alpha} \su_2\big( \tilde\rho_2, w_{1} \big) ,
 	\dfrac{1}{1-\alpha} \su_3\big( \tilde\rho_3, w_{1} \big) \right\},\\
 &q_2 = \alpha q_1,\\
 &q_3 = (1-\alpha) q_1,
\end{aligned}
\end{equation}
where similar to above $\tilde\rho_j$ 
is obtained by
\begin{equation}
\label{eq:rho_i_diverge}
\tilde\rho_j :=
p_j^{-1}
\left(
\max\right\{
0, w_1-v_j
\left\}
\right),
\qquad j\in\{2,3\}.
\end{equation}
$q_1$ determines the mass flow out of road 1, and $q_2$, $q_3$ are the mass flows into roads 2 and 3, respectively. 
They are uniquely defined thanks to~\eqref{eq:q1-to-2}.
Since we have \mbox{$\tilde w = w_{1}$} at the junction, 
the necessary momentum flow can be computed by multiplication of $q_1$, $q_2$ and $q_3$ with $\tilde w$ as above.

\begin{rem}[Generalization to a $1 \times m$ junction]
The above results can be easily generalized to a diverge with $n = 1$ incoming road and $m \geq 1$ outgoing roads.
It suffices to consider an assignment matrix 
$A = \left( \alpha_{2,1}, \ldots, \alpha_{j,1}, \ldots, \alpha_{m+1,1} \right)$
with $\alpha_{j,1} \in \,]0,1[$ for any $j \in \{ 2, \ldots, m+1 \}$ and
$\displaystyle \sum_{j = 2}^{m+1} \alpha_{j,1} = 1$.
The admissible set is thus given by
$$ \Omega_{1\times m} = \left\{ q_1 \in \R \ \left| \ \begin{array}{cl}
0 \leq q_1 \leq \Delta_1 & \\
0 \leq q_j = \alpha_{j,1} q_1 \leq \Sigma_j, & \forall j \in \{2,\ldots, m+1 \}
\end{array} \right. \right\}, $$
where $\Delta_1 = \de_1\big( \rho_{1}, w_{1} \big)$ and
$\Sigma_j = \su_j\big( \tilde\rho_j, w_{1} \big)$ for any $j \in \{2, \ldots, m+1\}$.

Then we obtain
\begin{equation}
\label{eq:q1-to-m}
\begin{aligned}
 &q_1 = \min \left\{ 
 	\de_1\big( \rho_{1}, w_{1} \big),\, 
 	\min_{j \in \{ 2, \ldots, m+1\} } \dfrac{1}{\alpha_{j,1}} \su_j\big( \tilde\rho_j, w_{1} \big) \right\},\\
 &q_j = \alpha_{j,1} q_1, \quad \forall j \in \{ 2,\ldots, m+1 \}.
\end{aligned}
\end{equation}
The modified densities $\tilde\rho_j$ are obtained as in~\eqref{eq:rho_i_diverge}.


\end{rem}

\section{Coupling conditions for a 2-to-1 merging junction}
\label{sec:merge-coupling}

We use index~1 and~2 for the incoming roads, and index~3 for the outgoing road (see Figure~\ref{fig:2to1}).
This case is more involved. This is due to the fact that the conservation of the momentum flow produces 
some nonlinear constraints in the set of admissible fluxes.

\begin{figure}[htb]
\centering
	\includegraphics[width=0.45\textwidth]{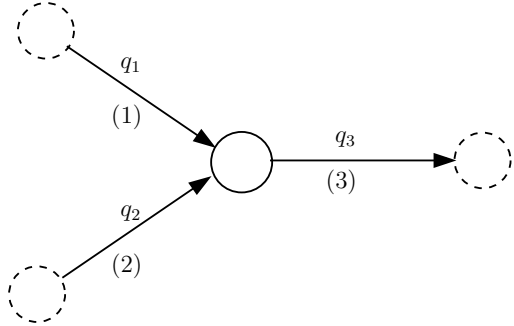}
    \label{fig:2to1}
    \caption{2-to-1 junction.}
\end{figure}

Indeed, in this case, the admissible set is given by
\begin{equation}
\label{eq:set_merge}
\Omega_{2\times 1} = \left\{ \left( q_1, q_2 \right) \in \R^2 \ \left| \ \begin{array}{c}
0 \leq q_1 \leq \Delta_1 \\
0 \leq q_2  \leq \Delta_2 \\
0 \leq q_3 = q_1 + q_2 \leq \Sigma_3
\end{array} \right. \right\},
\end{equation}
where $\Delta_1 = \de_1\big( \rho_{1}, w_{1} \big)$, $\Delta_2 = \de_2 \big( \rho_{2}, w_{2} \big)$
are the incoming demands and $\Sigma_3 = \su_3 \left( \tilde\rho_3, \tilde w \right)$ is the downstream supply.
The \emph{modified} density on the outgoing road
$\tilde\rho_3$ is given by 
\[
\tilde\rho_3 :=
p_3^{-1}
\left(
\max
\left\{
0, \tilde w - v_3
\right\}
\right),
\]
which, for the considered pressure functions~\eqref{eq:pressure},
reads
\begin{align}\label{eq:tilde_rho_3}
 \tilde \rho_3 = \rho_3^{\max} \left( \max \left\{ 0, \frac{\gamma_3}{\vref_3} \big( \tilde{w} - v_3 \big) \right\} \right)^\frac{1}{\gamma_3} .
\end{align}
Above, $\tilde w$ is the convex combination of $w_1$ and $w_2$ given implicitly by
\begin{align} \label{eq:tilde_w}
 \tilde w = \frac{q_1}{q_1 + q_2} w_1 + \frac{q_2}{q_1 + q_2} w_2 ,
\end{align}
which depends on the final ``mixture rate'' ${q_1}/{q_2}$.
To overcome this difficulty, we propose to build a two-steps iterative process
to define a unique solution to the Riemann problem on a 2-to-1 merge,
based on our assumption~\ref{A4}.
Unlike other coupling conditions~\cite{HautBastin2007,
Jin2003,ParzaniBuisson2012,SiebelMauserMoutariRascle2009}, 
we assume that there exists a priority vector $\mathbf{\P} = (\P_1 , \P_2) = (\P, 1-\P)$ with $\P \in \, ]0,1[$ 
that is independent of the demand of the incoming roads.
It is noteworthy that a similar iterative process has been previously presented 
in~\cite{marigo2008fluid, monache2016priority} for the first order LWR model
and in~\cite{garavello2016riemann} for a Phase Transition model.

\subsection{Setting of the supply function}

Assume that there exists a parameter $\p \in [0,1]$ 
(that could be different to the priority parameter $\P$ introduced above) 
such that
$$ \begin{aligned}
&q_1 = \p q_3, \\
&q_2 = (1-\p) q_3,
\end{aligned} $$
where we recall that $q_3 = q_1 + q_2$.
We set $\Delta w = w_1 - w_2$ and we observe that
$$ \tilde{w} = \dfrac{q_1 w_1 + q_2 w_2}{q_1 + q_2} =
w_2 + \p \Delta w =: w(\p). $$

With simple algebra, 
putting~\eqref{eq:supply},~\eqref{eq:sonic_point},~\eqref{eq:tilde_rho_3} and~\eqref{eq:tilde_w} together,
the supply function $\Sigma_3 = \su_3 \left( \tilde{\rho}_3, \tilde{w} \right)$
can be written in a general form
$$ \Sigma_3 (q_1,q_2) = K \left( \dfrac{q_1 w_1 + q_2 w_2}{q_1 + q_2} + \delta \right)^{\gamma}, $$
or equivalently
$$ \tilde{\Sigma}_3 (\p) := K \left( w_2 + \p \Delta w + \delta \right)^{\gamma}, $$
with
$$ \begin{cases}
K = \left( \dfrac{\gamma_3}{\gamma_3+1} \right)^{\frac{\gamma_3+1}{\gamma_3}} 
\dfrac{\rho^{\max}_3}{\left( v^{ref}_3 \right)^{\frac{1}{\gamma_3}}}, \ \delta = 0, \ \gamma = \dfrac{\gamma_3+1}{\gamma_3} ,
\quad &\mbox{if} \quad w(\p) \leq \dfrac{\gamma_3+1}{\gamma_3} v_3, \\
K = v_3 \rho^{\max}_3 \left( \dfrac{\gamma_3}{v^{ref}_{3}} \right)^{\frac{1}{\gamma_3}}, \ \delta = -v_3, \ \gamma = \dfrac{1}{\gamma_3} ,
\quad &\mbox{if} \quad w(\p) > \dfrac{\gamma_3+1}{\gamma_3} v_3.
\end{cases} $$

Observe that
\begin{align*}
\Sigma_3 (q_1, q_2) = \tilde{\Sigma}_3 \left( \dfrac{q_1}{q_1 + q_2} \right) \quad \mbox{for any} \quad (q_1, q_2) \in \mathbb{R}^{+} \times \mathbb{R}^{+} \setminus (0,0).
\end{align*}

It is noteworthy that for $\Delta w \neq 0$ there exists  $\hat{\P}\in\mathbb{R}$ such that $w(\hat{\P}) = \dfrac{\gamma_3+1}{\gamma_3} v_3$. 
This particular priority value separates the two branches of the curve $\Sigma_3 (q_1, q_2) = q_1+q_2$ in the $(q_1,q_2)$ plane. 
It is explicitly given by
\begin{equation}\label{eq:hat_P}
\hat{\P} := \dfrac{1}{\Delta w} \left( \dfrac{\gamma_3+1}{\gamma_3} v_3 - w_2 \right).
\end{equation}

We observe that $\tilde{\Sigma}_3$ is continuously differentiable but not $C^2$ in $\hat{P}$.

\subsection{Convexity of the set of admissible states $\Omega_{2\times 1}$}

\begin{pro}[Convexity of the set of admissible states]
\label{pro:convexity}
The set of admissible states $\Omega_{2\times 1}$ defined in~\eqref{eq:set_merge} is non-empty and convex.
\end{pro}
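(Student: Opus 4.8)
The plan is to dispose of non-emptiness by exhibiting one admissible point, and then to obtain convexity through a linear change of variables that recasts the single nonlinear constraint as the epigraph of a convex function. Non-emptiness is immediate: taking $(q_1,q_2)=(0,0)$ gives $q_3=0$, and all three conditions in~\eqref{eq:set_merge} hold, the upper bound $q_3\le\Sigma_3$ reducing to $0\le\Sigma_3$, which is true since a supply is nonnegative. Thus $\Omega_{2\times1}\neq\emptyset$.

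For convexity, the box $\{0\le q_1\le\Delta_1,\ 0\le q_2\le\Delta_2\}$ is a rectangle, so the entire difficulty is the nonlinear constraint $q_1+q_2\le\Sigma_3(q_1,q_2)$ with $\Sigma_3(q_1,q_2)=\tilde\Sigma_3\!\left(q_1/(q_1+q_2)\right)$. The obstacle I anticipate is that $\tilde\Sigma_3$ is only $C^1$ and, its two branches having exponents on opposite sides of $1$, is neither convex nor concave; so the feasible set cannot be presented directly as the superlevel set of a concave function. I would first treat the degenerate case $\Delta w=w_1-w_2=0$, where $\tilde w\equiv w_1$ is constant and the constraint becomes the half-plane $q_1+q_2\le\tilde\Sigma_3$, trivially convex. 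For $\Delta w\neq0$ I would pass to the linear coordinates $s=q_1+q_2$ and $Y=q_1w_1+q_2w_2$; this map is invertible, preserves convexity, sends the cone $q_1,q_2\ge0$ to $\{w_2s\le Y\le w_1 s\}$, and satisfies $\tilde w=Y/s$.

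The key computation is to rewrite $s\le\Sigma_3$ branch by branch. On the branch $w(z)\le\frac{\gamma_3+1}{\gamma_3}v_3$, where $\Sigma_3=K(Y/s)^{\gamma}$ with $\gamma=(\gamma_3+1)/\gamma_3$, clearing the denominator and raising to the power $1/\gamma$ turns $s\le\Sigma_3$ into $Y\ge c_A\,s^{\beta_A}$ with $\beta_A=1+1/\gamma>1$ and $c_A>0$; on the other branch, setting $Z=Y-v_3 s$, the inequality $s\le K(Y/s-v_3)^{1/\gamma_3}$ becomes $Y\ge v_3 s+c_B\,s^{\beta_B}$ with $\beta_B=\gamma_3+1>1$ and $c_B>0$. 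In both cases the right-hand side is convex in $s$, the exponent exceeding $1$ and the coefficients being nonnegative. Since $\tilde\Sigma_3$ is increasing in $\tilde w$, for each fixed $s$ the admissible $Y$ form an upper half-line, so the supply constraint is exactly the epigraph $\{Y\ge G(s)\}$, where $G$ equals the first power law below and the second above the value $s^\ast$ at which the boundary meets the ray $\tilde w=\frac{\gamma_3+1}{\gamma_3}v_3$.

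It then remains to glue the branches. Because $\tilde\Sigma_3$ is $C^1$ and $\tilde w$ is affine in $z$, $G$ is $C^1$ at $s^\ast$, matching both value and slope, and it is convex on $[0,s^\ast]$ and on $[s^\ast,\infty)$ separately; a $C^1$ function that is convex on each side of a point has a nondecreasing derivative throughout and is therefore convex on all of $[0,\infty)$. Consequently the supply constraint is the intersection of the epigraph of the convex function $G$ with the convex cone $\{w_2s\le Y\le w_1 s\}$, hence convex; pulling back through the linear map and intersecting with the rectangle gives the convexity of $\Omega_{2\times1}$. I expect this gluing --- checking the $C^1$ matching so that piecewise convexity upgrades to global convexity --- to be the crux, since the loss of $C^2$ regularity at $\hat P$ is precisely what could a priori produce a nonconvex kink.
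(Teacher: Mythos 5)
Your proof is correct, but it follows a genuinely different route from the paper's. The paper works in the original $(q_1,q_2)$ coordinates: it parameterizes the supply boundary by the flux ratio $\p=q_1/(q_1+q_2)$, takes two boundary points $\Qt,\Qtt$, and shows the chord between them is feasible by comparing, along each ray through the origin, the total flux of the chord point with that of the boundary point; this reduces to a one-variable inequality $g(\lambda)\ge 0$ with $g(0)=g(1)=0$, established by checking that every stationary point of $g$ has strictly negative (one-sided) second derivative, so no interior minimum can exist. It then invokes star-shapedness and intersects with the demand box. You instead pass to the linear coordinates $(s,Y)=(q_1+q_2,\,q_1w_1+q_2w_2)$, under which $\tilde w=Y/s$ and the supply constraint becomes the epigraph $\{Y\ge G(s)\}$ of an explicit piecewise power law with exponents $1+\gamma_3/(\gamma_3+1)$ and $\gamma_3+1$, both exceeding $1$; convexity of each branch is immediate, and the $C^1$ matching at the branch point (inherited from the $C^1$ regularity of $\tilde\Sigma_3$ at $\hat{\P}$, which the paper records) upgrades piecewise convexity to global convexity. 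Your argument is more structural: it explains \emph{why} the set is convex rather than verifying a chord condition, it isolates the role of the $C^1$ gluing explicitly, and it avoids the stationary-point computation. The paper's argument has the advantage of staying in the flux-ratio parameterization that is reused throughout the Pareto-front and Riemann-solver analysis, and of not requiring a coordinate change. Both proofs lean on the specific power-law pressure~\eqref{eq:pressure}. Two small points you should make explicit: the monotonicity used to pass from $s\le K(Y/s+\delta s/s)^{\gamma}$ to $Y\ge G(s)$ requires $Y\ge 0$ and $w(\p)+\delta\ge 0$ (both hold in the cone $\{\min(w_1,w_2)\,s\le Y\le\max(w_1,w_2)\,s\}$ since $w_i\ge 0$), and the one-line verification that the two explicit formulas for $G$ agree in value and slope at $s^*$ (equivalently, that the two branches of $\tilde\Sigma_3$ agree to first order at $\tilde w=\frac{\gamma_3+1}{\gamma_3}v_3$) should be written out rather than asserted.
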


\begin{proof}
First, we simply observe that $(0,0)$ belongs to $\Omega_{2\times 1}$.

Next, we consider two (distinct) points $$\Qt = \begin{pmatrix}\qat\\ \qbt\end{pmatrix} = \tilde{\Sigma}_3(\Pt) \begin{pmatrix}\Pt\\ 1-\Pt\end{pmatrix} \qquad \text{and} \qquad \Qtt = \begin{pmatrix}\qatt\\ \qbtt\end{pmatrix} = \tilde{\Sigma}_3(\Ptt) \begin{pmatrix}\Ptt\\ 1-\Ptt\end{pmatrix}$$ on the boundary induced by the supply function $\tilde{\Sigma}_3(\p)$, parameterized with respect to the flux ratio $\p = \frac{q_1}{q_1 + q_2}$ (see Figure~\ref{fig:convexity}). 
First neglecting restrictions due to demands $\Delta_1$ and $\Delta_2$, we will show that the segment between $\Qt$ and $\Qtt$ is feasible, i.e., $[\Qt,\Qtt] \subset \Omega_{2 \times 1}$. The convexity of the feasible set including demand restrictions ($q_1 \le \Delta_1$, $q_2 \le \Delta_2$) will follow from the fact that the intersection of convex sets is convex.

\begin{figure}[htb]
\centering
	\includegraphics[width=0.45\textwidth]{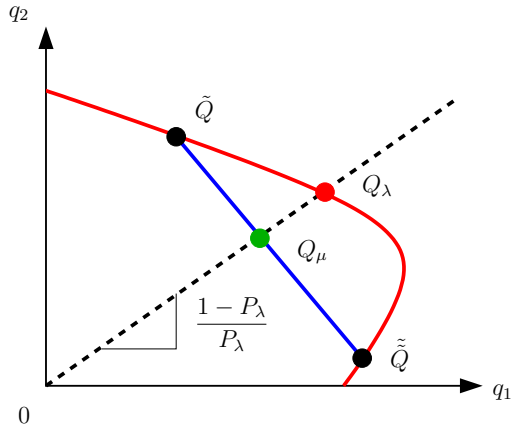}
    \label{fig:convexity}
    \caption{Sketch of the situation considered in the proof of Proposition~\ref{pro:convexity}.}
\end{figure}

Note that we consider the case
\begin{align*}
 \Delta w = w_1 - w_2 \ne 0.
\end{align*}
If $\Delta w = 0$, the boundary induced by the supply function is a straight line and the convexity of the feasible set is obvious.

Let
\[
 \Qmu = \Qt + \mu (\Qtt - \Qt) = \begin{pmatrix}q_1(\mu)\\ q_2(\mu)\end{pmatrix},
\qquad \mu\in [0,1]
\]
be the segment connecting $\Qt$ and $\Qtt$,
and set
\[
 \Pla = \Pt + \lambda \underbrace{(\Ptt - \Pt)}_{= \Delta P}, \qquad\lambda\in [0,1],
\]
such that $\Qla = \tilde{\Sigma}_3 \left( \Pla \right) \begin{pmatrix} \Pla \\ 1-\Pla \end{pmatrix}$ is on the same line through the origin as $\Qmu$.

We will show that the total flux corresponding to  $\Qmu$ is smaller than or equal to the total flux corresponding to the point $\Qla$, i.e.,
\begin{equation}\label{eq:inequality_convexity}
q_1 (\mu) + q_2 (\mu) \leq \tilde{\Sigma}_3 \left( \Pla \right),
\end{equation}
and thus the point $\Qmu$ is feasible (note that the feasible set is star-shaped with the origin as star center).

To compare the total fluxes corresponding to $\Qmu$ and $\Qla$, we first compute a representation of $\mu$ in terms of $\lambda$.
To simplify the notation, we introduce
\begin{align*}
 \St  = \tilde{\Sigma}_3(\Pt), \qquad
 \Stt = \tilde{\Sigma}_3(\Ptt), \qquad
 \Delta\Sigma = \Stt - \St.
\end{align*}

Since $\Qmu$ and $\Qla$ are on the same line through the origin, we have
\[
 \frac{q_1(\mu)}{q_1(\mu) + q_2(\mu)} = \Pla
\qquad \Leftrightarrow \qquad \frac{\Pt \St + \mu (\Ptt\Stt - \Pt\St)}{\St + \mu \Delta\Sigma} = \Pt + \lambda \Delta P.
\]

Multiplying both sides with $\St + \mu \Delta\Sigma$ and solving for $\mu$ yields (after a few algebraic transformations)
\begin{align}\label{eq:mu}
 \mu = \frac{\lambda\St}{(1-\lambda)\Stt + \lambda\St}.
\end{align}

Since the flux corresponding to $\Qmu$ is given by $\St + \mu \Delta\Sigma$, replacing $\mu$ from \eqref{eq:mu} yields
\[
 \St + \mu \Delta\Sigma = \frac{\St \Stt}{(1-\lambda)\Stt + \lambda\St}.
\]
To conclude the proof, we need to show~\eqref{eq:inequality_convexity}. 
This is equivalent to showing that
\[
 g(\lambda) := \tilde{\Sigma}_3(\Pla) \big( (1-\lambda)\Stt + \lambda\St \big) - \St \Stt
 \geq 0, \qquad \forall \lambda\in[0,1].
\]

Note that $g$ is continuously differentiable and that $g(0) = 0 = g(1)$. Thus, if there would exist a point $\lambda\in\, ]0,1[$ with $g(\lambda) < 0$, there would exist at least one local minimum of $g$ in $]0,1[$. We will show that such a point cannot exist.
Note that $g$ is twice continuously differentiable in $]0,1[$ except for at most one point, i.e.\ $\hat{P}$ defined in~\eqref{eq:hat_P}, and the following proof also covers the case in which $P_{\las}=\hat{P}$. 

We have:
\begin{align*}
 \tilde{\Sigma}_3(\p) &= K \big( w(\p) + \delta \big)^{\gamma},\\
 \tilde{\Sigma}_3'(\p) &= K \gamma \big( w(\p) + \delta \big)^{\gamma-1} \Delta w,\\
 \tilde{\Sigma}_3''(\p) &= K \gamma (\gamma-1) \big( w(\p) + \delta \big)^{\gamma-2} (\Delta w)^2,
\end{align*}
where $w(\p) = w_2 + \p \Delta w$. 

For the derivatives of $g$ we get
\begin{align*}
 g'(\lambda) &= \tilde{\Sigma}_3'(\Pla) \Delta P \big( (1-\lambda) \Stt + \lambda \St \big) - \tilde{\Sigma}_3(\Pla) \Delta\Sigma,\\
 g''(\lambda) &= \tilde{\Sigma}_3''(\Pla) (\Delta P)^2 \big( (1-\lambda) \Stt + \lambda \St \big) - 2 \tilde{\Sigma}_3'(\Pla) \Delta P \Delta\Sigma.
\end{align*}

Now, for any stationary point $\las$, we get from $g'(\las)=0$
\begin{align}\label{eq:ds}
 \Delta\Sigma = \frac{\tilde{\Sigma}_3'(\Plas) \Delta P \big( (1-\las) \Stt + \las \St \big)}{\tilde{\Sigma}_3(\Plas)}.
\end{align}
From \eqref{eq:ds}, we get for the (one-sided) second derivative(s) of $g$
\begin{align*}
 g''(\las) &= \tilde{\Sigma}_3''(\Plas) (\Delta P)^2 \big( (1-\las) \Stt + \las \St \big) - 2 \tilde{\Sigma}_3'(\Plas) \Delta P \Delta\Sigma\\
  &= K \gamma (\gamma-1) \big( w(\Plas) + \delta \big)^{\gamma-2} (\Delta w)^2 (\Delta P)^2 \big( (1-\las) \Stt + \las \St \big)\\
  & \quad - 2 K \gamma \big( w(\Plas) + \delta \big)^{\gamma-1} \Delta w \Delta P \frac{\tilde{\Sigma}_3'(\Plas) \Delta P \big( (1-\las) \Stt + \las \St \big)}{\tilde{\Sigma}_3(\Plas)}\\
  &= K \gamma (\gamma-1) \big( w(\Plas) + \delta \big)^{\gamma-2} (\Delta w)^2 (\Delta P)^2 \big( (1-\las) \Stt + \las \St \big)\\
  & \quad - 2 K^2 \gamma^2 \big( w(\Plas) + \delta \big)^{2\gamma-2} (\Delta w)^2 (\Delta P)^2
  \frac{\big( (1-\las) \Stt + \las \St \big)}{K \big( w(\Plas) + \delta \big)^{\gamma}}\\
  &=  \underbrace{K \big( w(\Plas) + \delta \big)^{\gamma-2}}_{>0} \underbrace{(\Delta w)^2}_{>0} \underbrace{(\Delta P)^2}_{>0} \underbrace{\big( (1-\las) \Stt + \las \St \big)}_{>0} \underbrace{\big( \gamma (\gamma-1) - 2 \gamma^2 \big)}_{=-(\gamma + \gamma^2) < 0} < 0.
\end{align*}
\end{proof}

Note that the convexity of the feasible set ensures the continuous dependence of the proposed Riemann solver based on~\ref{A4} with respect to the priority parameter $P$.

\subsection{Pareto front}
We are interested in the points $(q_1, q_2) \in \Omega_{2\times 1}$ such that both $q_1$ and $q_2$ are maximal.
The solution lies on the Pareto front, i.e., the set of points for which 
neither $q_1$ nor $q_2$ can be increased without decreasing the other one.
To this aim, we consider the local optima of 
$q_1 (\p) = \p \tilde{\Sigma}_3 (\p)$ and $q_2(\p) = (1-\p) \tilde{\Sigma}_3 (\p) = \tilde{\Sigma}_3(\p) - q_1(\p)$. 
We compute
\begin{align*}
q_1'(\p) &= \tilde{\Sigma}_3 (\p) + \p \tilde{\Sigma}_3' (\p) \\
&= K \left( w_2 + \p \Delta w + \delta \right)^{\gamma-1}
\left[ \Delta w (\gamma + 1) \p + w_2 + \delta \right]
\end{align*}
and
\begin{align*}
q_1''(\p) &= 2 \tilde{\Sigma}_3' (\p) + \p \tilde{\Sigma}_3'' (\p) \\
&= K \gamma \Delta w \left( w_2 + \p \Delta w + \delta \right)^{\gamma-2}
\left[ \Delta w (\gamma + 1) \p + 2 (w_2 + \delta) \right].
\end{align*}

Let
\begin{equation}
\label{eq:P*}
P^* := - \dfrac{w_2 + \delta}{(\gamma+1) \Delta w}
= \begin{cases}
- \dfrac{\gamma_3}{2 \gamma_3 +1} \dfrac{w_2}{\Delta w}
\quad &\mbox{if} \quad w_2 \leq \dfrac{2 \gamma_3+1}{\gamma_3} v_3, \\
\\
- \dfrac{\gamma_3}{\gamma_3 +1} \dfrac{w_2-v_3}{\Delta w}
\quad &\mbox{if} \quad w_2 > \dfrac{2 \gamma_3+1}{\gamma_3} v_3,
\end{cases}
\end{equation}
the point such that $q_1'(P^*) = 0$.

We observe that $\Delta w (\gamma + 1) P^* + w_2 + \delta = 0$
and we deduce that
$$ q_1'' (P^*) = K \gamma \Delta w \left( \dfrac{\gamma}{\gamma+1} \right)^{\gamma-2} (w_2 + \delta)^{\gamma-1}, $$
which has the same sign as $\Delta w$ 
(indeed, one can prove that $w_2 + \delta$ is always non-negative).
More precisely, if $\Delta w > 0$ then $P^*$ is a local minimum and
if $\Delta w < 0$ then $P^*$ is a local maximum for $q_1$.

Similarly, we can simply compute the local optima for $q_2 : \p \mapsto q_2(\p)$ 
by using the previous analysis of $\p \mapsto q_1(\p)$ and its derivatives, since 
\begin{align*}
q_2'(\p) &= \tilde{\Sigma}'_3(\p) - q_1'(\p) \\
&= - K \left( w_2 + \p \Delta w + \delta \right)^{\gamma-1}
\left[ \Delta w (\gamma + 1) \p + w_2 + \delta - \gamma \Delta w \right]
\end{align*}
and
\begin{align*}
q_2''(\p) &= \tilde{\Sigma}''_3(\p) - q_1''(\p) \\
&= -K \gamma \Delta w \left( w_2 + \p \Delta w + \delta \right)^{\gamma-2}
\left[ \Delta w (\gamma + 1) \p + 2 (w_2 + \delta) - (\gamma-1) \Delta w \right].
\end{align*}
We set $P^{**} := P^* + \dfrac{\gamma}{\gamma+1}$, say
\begin{equation}
\label{eq:P**}
P^{**} = \dfrac{\gamma}{\gamma+1} - \dfrac{w_2 + \delta}{(\gamma+1) \Delta w} 
= \begin{cases}
\dfrac{1}{2 \gamma_3+1} 
\left( 1 - \gamma_3 \dfrac{2 w_2 - w_1}{\Delta w} \right)
\quad &\mbox{if} \quad w_1 \leq \dfrac{2 \gamma_3+1}{\gamma_3} v_3, \\
\\
\dfrac{1}{\gamma_3+1} 
\left( 1 - \gamma_3 \dfrac{w_2-v_3}{\Delta w} \right)
\quad &\mbox{if} \quad w_1 > \dfrac{2 \gamma_3+1}{\gamma_3} v_3,
\end{cases}
\end{equation}
which satisfies $q_2'(P^{**}) = 0$.

Using the fact that $\Delta w (\gamma + 1) P^{**} + w_2 + \delta - \gamma \Delta w = 0$, we compute 
$$q_2''(P^{**}) = -K \gamma \Delta w \left( \dfrac{\gamma}{\gamma+1} \right)^{\gamma-2}
\left( w_1 + \delta \right)^{\gamma-1}.$$
It is easy to observe that
if $\Delta w < 0$, then $P^{**}$ is a local minimum and 
if $\Delta w > 0$, then $P^{**}$ is a local maximum for $q_2$.

\begin{rem}
\label{rem:lim_P_star}
While this analysis is conducted for any value of $\Delta w$, we will only consider $P^*$ (respectively $P^{**}$) when $\Delta w < 0$ (resp.\ $\Delta w > 0$) since we want to maximize the fluxes $\left( q_1, q_2 \right)$ in accordance with assumption~\ref{A4}.

It is also interesting to observe that
$\displaystyle \lim_{\substack{\Delta w \to 0 \\ \Delta w < 0}} P^* = +\infty$ and
$\displaystyle \lim_{\substack{\Delta w \to 0 \\ \Delta w > 0}} P^{**} = -\infty$.
\end{rem}

We are now ready to properly set our Riemann solver.

\subsection{Definition of the Pareto-optimal priority-based Riemann solver}

The main idea of our construction principle is the following: 
starting from the flux ratio $\p = \P$ given by the priority parameter
and if the corresponding point on the boundary of the feasible set is not Pareto optimal itself,
we decrease (resp.\ increase) the flux ratio $\p$ if $\Delta w = w_1 - w_2 < 0$ (resp.\ if $\Delta w > 0$)
until we reach the closest point on the Pareto front.

Given initial Riemann data  on each branch of the junction,
we define a vector $Q = (q_1, q_2)$ of incoming fluxes by a two-step procedure that is given as follows:
\begin{itemize}
\item \textbf{Step 1}: We first compute the theoretical flux on road 3 allowed if the priority parameter is enforced
\begin{align*}
F(\P) &= \min \left\{ \dfrac{\Delta_1}{\P} , \dfrac{\Delta_2}{1-\P} , \tilde{\Sigma}_3(\P) \right\}
\end{align*}
with
\begin{align*}
\Delta_i &= \de_i ( \rho_i, w_i ) \quad \mbox{for any} \quad i \in \{ 1, 2\}, \\
\tilde{\Sigma}_3 (\P) &= \su_3 ( \rho_\P , w_\P ), \\
w_\P &= w_2 + \P \Delta w.
\end{align*}
The value of $\rho_\P$ is given either by the intersection of the curves 
$\{ w = w_\P \}$ and $\{ v = v_3 \}$ 
or by $\rho_\P = 0$.
It is given by
\[
\rho_\P = p_3^{-1} \left( \max\{ 0, w_\P - v_3 \} \right)
\]
and it can be computed explicitly,
$$ \rho_\P = \rho_3^{\max} \left( \max \left\{ 0, \frac{\gamma_3}{\vref_3} \big( w_\P - v_3 \big) \right\} \right)^\frac{1}{\gamma_3} . $$

Further we compute the corresponding fluxes on both incoming roads
$$\tilde q_1 = \P F(\P) \qquad \text{and} \qquad \tilde q_2 = (1-\P) F(\P).$$

\item \textbf{Step 2}: We then distinguish the following different cases:

\begin{enumerate}

\item If
\begin{equation}\label{eq:conditionsEasyCase}
\begin{aligned}
 &\Delta w = 0, \qquad & \text{or}\\
 &\Delta w < 0 \quad \text{and} \quad \P \le P^*, \qquad & \text{or}\\
 &\Delta w > 0 \quad \text{and} \quad \P \ge P^{**},
 \end{aligned}
\end{equation}
we choose
\begin{equation}
\begin{aligned}\label{eq:resultEasyCase}
 q_1 &= \min \left\{ \Delta_1, \max \left\{
\tilde q_1 , \Sigma_3(q_1,q_2) - q_2 \right\} \right\}, \\
 q_2 &= \min \left\{ \Delta_2, \max \left\{
\tilde q_2 , \Sigma_3(q_1, q_2) - q_1 \right\} \right\}.
\end{aligned}
\end{equation}
Existence and uniqueness of this choice will be discussed in Section~\ref{sec:w1=w2} below as well as fulfillment of property~\ref{A4}.

\item\label{RS-hardCase} If
$$\Delta w < 0 \quad \text{and} \quad \P \ge P^*,$$
we compute
$$q_1^* = P^* \tilde\Sigma_3(P^*) \qquad \text{and} \qquad q_2^* = (1-P^*) \tilde\Sigma_3(P^*)$$
and apply
\begin{equation}\label{eq:hardCase-q2}
q_2 = \min \left\{ \Delta_2, \max \left\{
q_2^* , \Sigma_3(q_1, q_2) - q_1 \right\} \right\}.
\end{equation}
For the computation of $q_1$ we distinguish the following cases. Again, existence and uniqueness as well as fulfillment of property~\ref{A4} will be discussed in Section~\ref{sec:w_1<w_2} below.
\begin{enumerate}
 \item If
 $$F(\P) = \tilde\Sigma_3(\P) \qquad \text{and} \qquad q_2^* \le \Delta_2,$$
 we apply
 \begin{equation}\label{eq:hardCase-subcase1-q1}
  q_1 = \min\{ q_1^*, \Delta_1 \}.
 \end{equation}
 
 \item Otherwise
 \begin{equation}\label{eq:hardCase-subcase2-q1}
  q_1 = \min \left\{ \Delta_1, \max \left\{ \tilde q_1 , \Sigma_3(q_1,q_2) - q_2 \right\} \right\}.
 \end{equation}
\end{enumerate}

\item The case
$$\Delta w > 0 \quad \text{and} \quad \P \le P^{**}$$
can be treated analogously to case \ref{RS-hardCase}.
\end{enumerate}

From Remark~\ref{rem:lim_P_star}, we note that $\Delta w \rightarrow 0^-$ ensures $\P \le P^*$ and $\Delta w \rightarrow 0^+$ ensures $\P \ge P^{**}$, which is relevant for the continuity of the Riemann solver with respect to $\Delta w$. 

\end{itemize}


\subsection{Analysis for case \eqref{eq:conditionsEasyCase}} 
\label{sec:w1=w2}

Starting from
\begin{equation*}
 F(\P) =  \min \left\{ \frac{\Delta_1}{\P}, \, \frac{\Delta_2}{1-\P}, \, \tilde{\Sigma}_3(\P)\right\},
\end{equation*}
the following cases might occur:
\begin{enumerate}[label*=(E\arabic*)]
 \item\label{case-E1} $F(\P) = \tilde{\Sigma}_3(\P)$ (Figure~\ref{fig:case1_1}): In this case, the point
 $$q_1 = \tilde q_1 = \P \tilde{\Sigma}_3(\P), \qquad q_2 = \tilde q_2 = (1-\P) \tilde{\Sigma}_3(\P)$$
 is Pareto optimal and fulfills the ``desired'' flux ratio ($\frac{q_1}{q_1+q_2} = \P$), \textbf{}thus~\ref{A4} is fulfilled.
 Obviously, the given point is a solution of~\eqref{eq:resultEasyCase}. Uniqueness of this solution follows from the strict monotonicity of the level set $q_2 = \Sigma_3(q_1,q_2) - q_1$ within $q_1, q_2 \ge 0$ in the considered case~\eqref{eq:conditionsEasyCase}.
 
 \begin{figure}[hbt]
 \centering
 \includegraphics{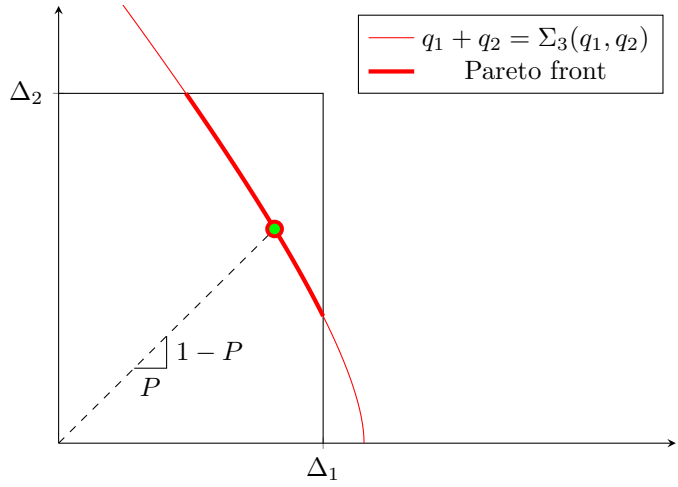}
 \caption{Illustration of the case \ref{case-E1}.}
 \label{fig:case1_1}
 \end{figure}
 
 \item\label{case-E2} $F(\P) = \frac{\Delta_1}{\P}$ (Figure~\ref{fig:case1_2}): The desired solution according to~\ref{A4} is given by $q_1 = \Delta_1$ and the maximization of $q_2$ subject to
\begin{equation}\label{eq:maxBoundsq2}
 \tilde q_2 \le q_2 \le \min\left\{ \Sigma_3(\Delta_1, q_2) - \Delta_1, \Delta_2 \right\},
\end{equation}
%
which is Pareto optimal and where the flux ratio $\p = \frac{q_1}{q_1+q_2}$ is as close as possible to~$\P$.

Since $\tilde q_1 = \Delta_1$ here, $q_1 = \Delta_1$ actually is the unique solution of~\eqref{eq:resultEasyCase} for $q_1$ (independent of the value of $q_2$). To show that also the second equation of~\eqref{eq:resultEasyCase} has a unique solution for $q_2$ (with $q_1 = \Delta_1$), fulfilling the maximization within the bounds~\eqref{eq:maxBoundsq2}, we consider the function
\begin{equation}
 G_2(q_2) = \min \left\{ \Delta_2, \max \left\{ \tilde q_2 , \Sigma_3(\Delta_1, q_2) - \Delta_1 \right\} \right\} - q_2.
\end{equation}
Since $G_2$ is continuous and satisfies $G_2(\Delta_2) \le 0$ and 
$$G_2(\tilde q_2) = \min \{ \underbrace{\Delta_2}_{\ge \tilde q_2}, \underbrace{ \max\{ \tilde q_2 , \Sigma_3(\Delta_1, \tilde q_2) - \Delta_1\}}_{\ge \tilde q_2} \} - \tilde q_2 \ge 0,$$
existence of a solution 
within the bounds~\eqref{eq:maxBoundsq2} is clear. 
Note that the solution can be computed by bisection. 
Uniqueness again follows from the strict monotonicity of the level set $q_2 = \Sigma_3(q_1,q_2) - q_1$ within $q_1, q_2 \ge 0$ in the considered case~\eqref{eq:conditionsEasyCase}: there is at most one intersection of the curves $q_1 = \Delta_1$ and the level set in the feasible domain, corresponding to the solution $q_2 = \Delta_2$ (no intersection) or a unique point on the level set with $q_1 = \Delta_1$. In both cases the found solution maximizes $q_2$ within the bounds of~\eqref{eq:maxBoundsq2}.

\begin{figure}[ht!]
 \centering
 \includegraphics{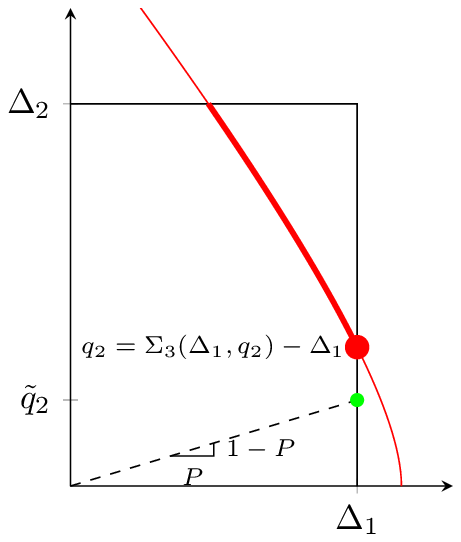}
 \includegraphics{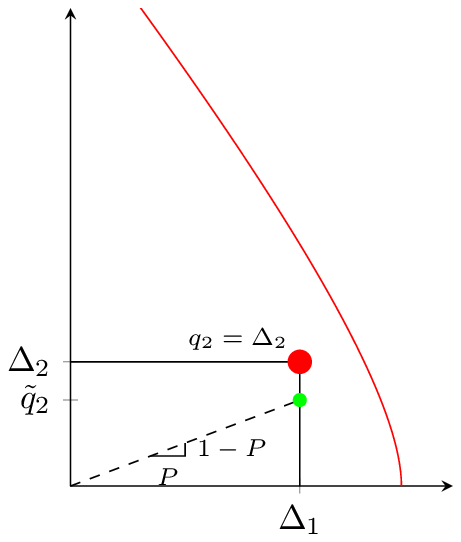}
 \caption{Illustration of the case \ref{case-E2}.}
 \label{fig:case1_2}
\end{figure}
  
 \item\label{case-E3} $F(\P) = \frac{\Delta_2}{1-\P}$ (Figure~\ref{fig:case1_3}): Analogous to case~\ref{case-E2}, the desired solution according to~\ref{A4} is given by $q_2 = \Delta_2$ and the maximization of $q_1$ subject to
 \begin{equation}
  \tilde q_1 \le q_1 \le \min\{ \Sigma_3(q_1, \Delta_2) - \Delta_2, \Delta_1 \},
 \end{equation}
 which corresponds to the unique solution of~\eqref{eq:resultEasyCase} in this case.
 The proof of the existence and uniqueness of this solution is a straightforward adaptation of the case (E2).

\begin{figure}[ht!]
 \centering
 \includegraphics{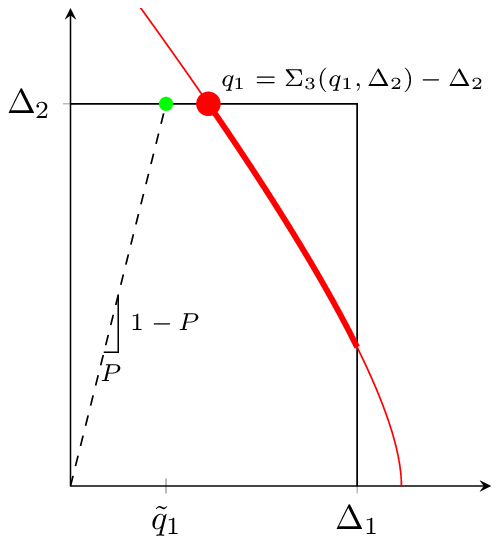}
 \includegraphics{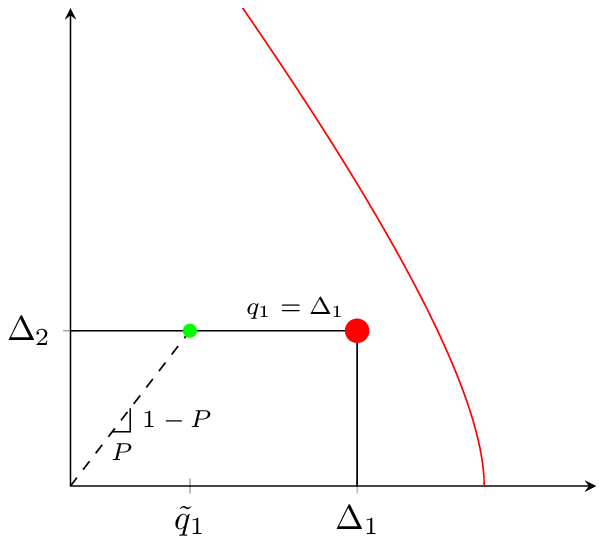}
 \caption{Illustration of the case \ref{case-E3}.}
 \label{fig:case1_3}
\end{figure}


\end{enumerate}


\subsection{Analysis for $\Delta w < 0$ and $\P \ge P^*$}
\label{sec:w_1<w_2}

First of all, notice that the case $\Delta w > 0$ with $\P < P^{**}$ can be treated analogously.

Next, we distinguish the following cases:
\begin{enumerate}[label*=(H\arabic*)]

 \item\label{case-H1} $F(\P) = \tilde{\Sigma}_3(\P)$ and $q_2^* \le \Delta_2$:
 \begin{enumerate}
 \item\label{case-H1-a} If additionally $q_1^* \le \Delta_1$, the point $(q_1^*,q_2^*)$ is feasible and fulfills the desired conditions~\ref{A4} (cf. Figure~\ref{fig:case2_2_1_1}). Further, it is also the unique solution of~\eqref{eq:hardCase-q2} and~\eqref{eq:hardCase-subcase1-q1}, since
 \begin{align*}
  \min\{ q_1^*, \Delta_1 \} = q_1^*,
 \end{align*}
 and $q_2 = q_2^* \le \Delta_2$ is the unique solution of $q_2 = \Sigma_3(q_1^*, q_2) - q_1^*$.
 
\begin{figure}[ht!]
 \centering
 \includegraphics{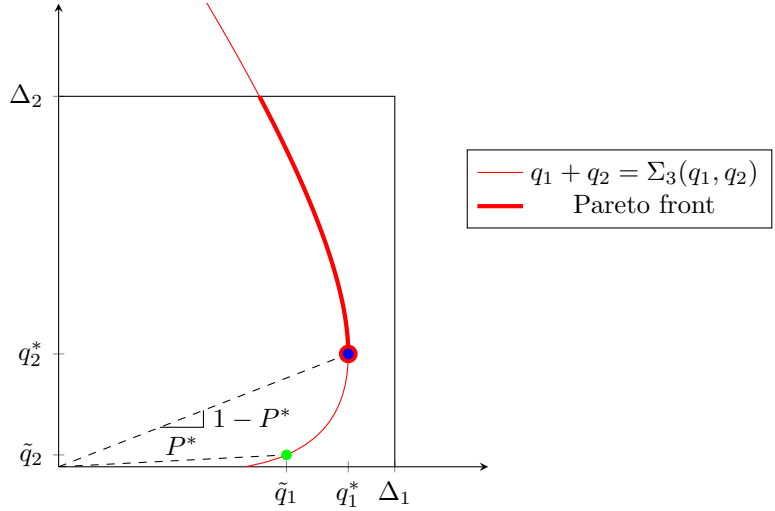}
 \caption{Illustration of the case \ref{case-H1-a}.}
 \label{fig:case2_2_1_1}
\end{figure} 
 
 \item\label{case-H1-b} If $q_1^* > \Delta_1$ (cf. Figure~\ref{fig:case2_2_1_2}), the desired solution according to \ref{A4} is $q_1 = \Delta_1$ and $q_2$ should be maximized subject to
 $$ q_2^* \le q_2 \le \min\{ \Delta_2, \Sigma_3(\Delta_1, q_2) - \Delta_1 \}.$$
 According to~\eqref{eq:hardCase-subcase1-q1}, $q_1 = \Delta_1$ is correctly chosen by our Riemann solver. Further, similar to the arguments in~\ref{case-E2}, the unique solution of~\eqref{eq:hardCase-q2} (with $q_1 = \Delta_1$) maximizes $q_2$ in the given bounds. Uniqueness here follows from the strict monotonicity of the level set for $q_2 \ge q_2^*$ and again bisection can be applied for the actual computation of the solution of~\eqref{eq:hardCase-q2}.
 
\begin{figure}[ht!]
 \centering
 \includegraphics{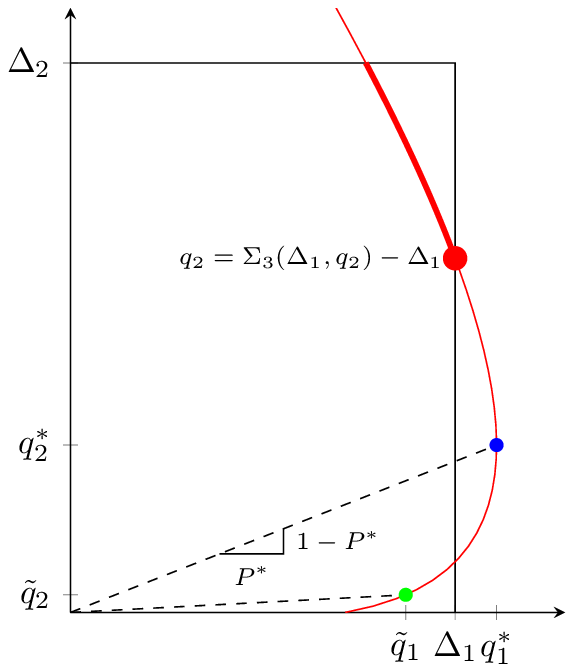}
 \includegraphics{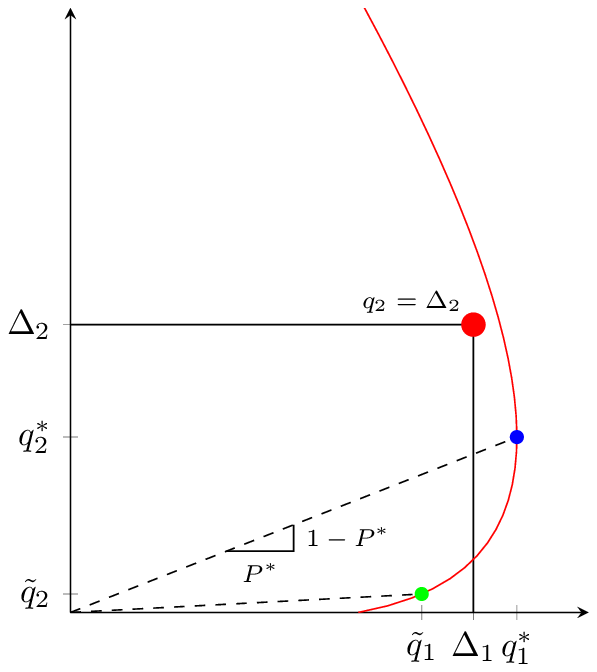}
 \caption{Illustration of the case \ref{case-H1-b}.}
 \label{fig:case2_2_1_2}
\end{figure} 

\end{enumerate}
 
 \item\label{case-H2} In the rest of the cases, the solution induced by~\ref{A4} should correspond to the (unique) solution of~\eqref{eq:hardCase-q2} and~\eqref{eq:hardCase-subcase2-q1}:
 
 \begin{enumerate}
  \item\label{case-H2-a} If $F(\P) = \tilde{\Sigma}_3(\P)$ and $q_2^* \ge \Delta_2$ (cf. Figure~\ref{fig:case2_2_1_3}): \ref{A4} induces $q_2 = \Delta_2$ and maximization of $q_1$ subject to
  $$ \tilde q_1 \le q_1 \le \min\{ \Delta_1, \Sigma_3(q_1,\Delta_2) - \Delta_2 \}. $$
  Since $q_2^* \ge \Delta_2$, \eqref{eq:hardCase-q2} directly yields
  $$ q_2 = \min\{ \Delta_2, \underbrace{\max \left\{
q_2^* , \Sigma_3(q_1, q_2) - q_1 \right\} }_{\ge q_2^*} \} = \Delta_2. $$
  With $q_2 = \Delta_2$ fixed, there is at most one solution of $q_1 = \Sigma_3(q_1,q_2) - q_2$ within the feasible set due to the strict monotonicity of the level set for $q_2 \le \Delta_2 \le q_2^*$. Further, $\Sigma_3(\tilde q_1,\Delta_2) - \Delta_2 \ge \tilde q_1$ so that either $q_1 = \Delta_1$ (no intersection with the level set) or the intersection point is the solution of~\eqref{eq:hardCase-subcase2-q1} - fulfilling the maximization of $q_1$ within the given bounds.
  
\begin{figure}[ht!]
 \centering
 \includegraphics{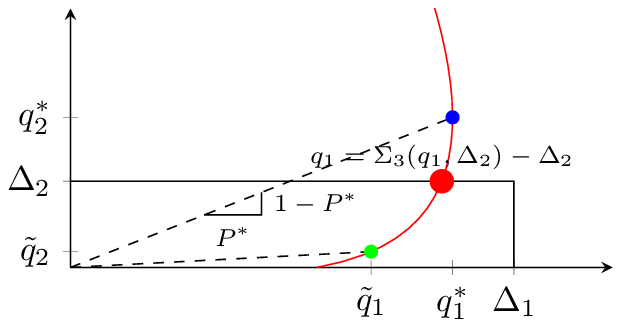}
 \includegraphics{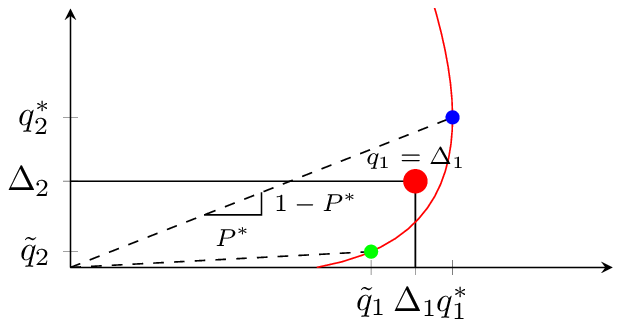}
 \caption{Illustration of the case \ref{case-H2-a}.}
 \label{fig:case2_2_1_3}
\end{figure} 
  
  \item If $F(\P) = \frac{\Delta_1}{\P}$: \ref{A4} induces $q_1 = \tilde q_1 = \Delta_1$ and maximization of $q_2$ subject to
  $$ \tilde q_2 \le q_2 \le \min\{ \Delta_2, \Sigma_3(\Delta_1,q_2) - \Delta_1 \}.$$
  First, \eqref{eq:hardCase-subcase2-q1} directly yields
  $$ q_1 = \min\{ \Delta_1, \underbrace{\max \left\{ \tilde q_1 , \Sigma_3(q_1,q_2) - q_2 \right\} }_{\ge \Delta_1} \} = \Delta_1$$
  as desired.
  
  Now, if $q_2^* \ge \Delta_2$, \eqref{eq:hardCase-q2} directly yields $q_2 = \Delta_2$, which maximizes $q_2$ in the given bounds. Otherwise, if $q_2^* \le \Delta_2$, \eqref{eq:hardCase-q2} only allows solutions $q_2 \in [q_2^*, \Delta_2]$. Again, we have at most one intersection of the level set $q_2 = \Sigma_3(q_1,q_2) - q_1$ with $q_1 = \Delta_1$ in the considered range, leading to a unique solution of~\eqref{eq:hardCase-q2}, which also maximizes $q_2$ in the given bounds. 
  
  \item If $F(\P) = \frac{\Delta_2}{1-\P}$: \ref{A4} induces $q_2 = \tilde q_2 = \Delta_2$ and maximization of $q_1$ subject to
  $$ \tilde q_1 \le q_1 \le \min\{ \Delta_1, \Sigma_3(q_1,\Delta_2) - \Delta_2 \}.$$
  Since we consider $\P \ge P^*$, we know $q_2^* \ge \tilde q_2 = \Delta_2$. Accordingly, \eqref{eq:hardCase-q2} directly yields $q_2 = \Delta_2$. Existence and uniqueness of a solution $q_1$ of~\eqref{eq:hardCase-subcase2-q1}, which additionally maximizes $q_1$ in the given bounds, follows again from the fact that there is at most one intersection of the level set $q_1 = \Sigma_3(q_1,q_2) - q_2$ with $q_2 = \Delta_2$ (and $\Sigma_3(\tilde q_1,\Delta_2) - \Delta_2 \ge \tilde q_1$).
  
 \end{enumerate}

 Note that the described solutions in the cases (H1) and (H2a) coincide if $q_2^* = \Delta_2$ (continuity of the proposed Riemann solver).

\end{enumerate}

\section{Numerical results}
\label{sec:numResults}

In this section we want to highlight two important features of the proposed merge model: First, it allows for the so-called capacity drop effect, i.e., an increase of ``desired'' fluxes on the incoming roads may lead to a decrease of the resulting accumulated outgoing flux. Secondly, the model allows for the exploitation of free capacities on the outgoing road since the flux ratio of the incoming roads is not strictly fixed by the given priority parameter.

We consider a simple merge situation as depicted in Figure~\ref{fig:2to1}. The two incoming roads with indices~1 and~2 have identical parameters $\rhomax_1 = \rhomax_2 = 180\,\ckm$, $\vref_1 = \vref_2 = 100\,\kmh$ and $\gamma_1 = \gamma_2 = 1.2$. The priority parameter is given by $P = 0.5$. The parameters of the outgoing road are $\rhomax_3 = 90\,\ckm$, $\vref_3 = 100\,\kmh$ and $\gamma_3 = 1.7$.

The initial states $(\rho_{i,0},v_{i,0})$ on all roads are chosen in such a way that
%
\begin{equation*}
 v_{i,0} = V_i(\rho_{i,0}) \quad \text{with} \quad V_i(\rho) = \vref_i \left( 1 - \frac{\rho}{\rhomax_i} \right).
\end{equation*}
The initial density on the first incoming road is given by $\rho_{1,0} = 30 \, \ckm$, resulting in a desired flux of $\rho_{1,0} v_{1,0} = 2500 \, \ch$.
On the second incoming road, we consider various densities $\rho_{2,0} \in [0,\frac{\rhomax_2}{2}]$ resulting in desired fluxes between $1000 \, \ch$ and $3500 \, \ch$.
On the outgoing road~3, we consider a low initial density of $\rho_{3,0} = 10 \, \ckm$.

The evolving situation at the merging point is computed by a numerical simulation based on a Godunov scheme as in~\cite{kolb2016capacity}, applying the proposed merge model at the common boundary of the three roads.
Table~\ref{tab:capacityDropMerge} reports the resulting fluxes at the merging point. The results are further illustrated in Figure~\ref{fig:capacityDrop}. Starting from a desired flux of about $1500 \, \ch$ on road 2, a further increase of the desired inflow results in a reduction of the accumulated outflow, i.e., the capacity drop effect takes place. Further note that the ratio of the resulting incoming fluxes is not fixed by the given priority parameter: Free capacity is exploited by road~1 in this example until the resulting flux on road~2 accounts for half of the resulting outgoing flux.

\begin{table}[hbt]
 \centering
 \caption{Capacity drop effect at a merge}
 \label{tab:capacityDropMerge}
 {\small
 \begin{tabular}{c|c||c|c||c||c|c}
  \multicolumn{2}{c||}{flux from road 1 in $[\ch]$} & \multicolumn{2}{c||}{flux from road 2 in $[\ch]$} & outflow & \multicolumn{2}{c}{flux ratio} \\
   \ \ desired \ \ & actual & \ \ desired \ \ & actual  & in $[\ch]$ & road 1 & road 2 \\
  \hline
  \hline
  2500.0 & 2500.0 & 1000.0 & 1000.0 & 3500.0 & 0.714 & 0.286 \\
  2500.0 & 2500.0 & 1400.0 & 1400.0 & 3900.0 & 0.641 & 0.359 \\
  2500.0 & 2413.1 & 1500.0 & 1500.0 & 3913.1 & 0.617 & 0.383 \\
  2500.0 & 2155.0 & 1750.0 & 1750.0 & 3905.0 & 0.552 & 0.448 \\
  2500.0 & 1945.3 & 2000.0 & 1945.3 & 3890.6 & 0.500 & 0.500 \\
  2500.0 & 1924.6 & 2500.0 & 1924.6 & 3849.3 & 0.500 & 0.500 \\
  2500.0 & 1903.9 & 3000.0 & 1903.9 & 3807.7 & 0.500 & 0.500 \\
  2500.0 & 1881.9 & 3500.0 & 1881.9 & 3763.8 & 0.500 & 0.500 
 \end{tabular}}
\end{table}

\begin{figure}[hbtp]
 \centering
 \includegraphics{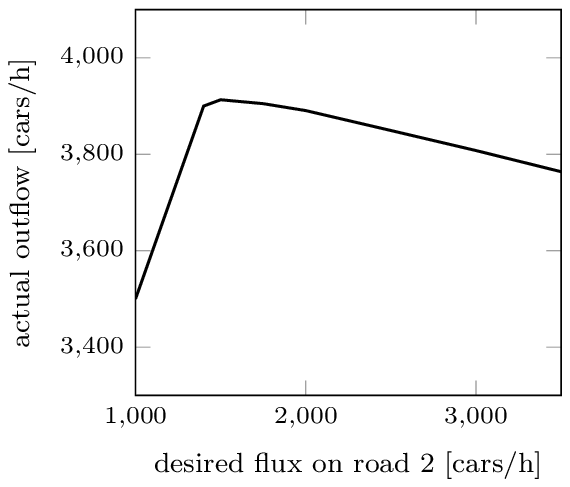}
 \quad
 \includegraphics{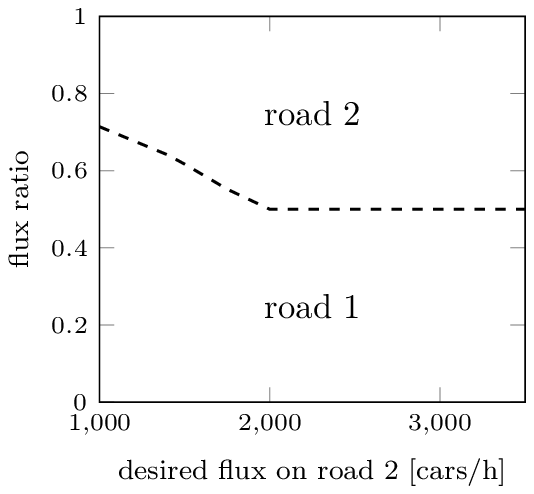}

  \caption{Actual outflow and flux ratio depending on the desired inflow on road 2.}
  \label{fig:capacityDrop}
\end{figure}

\section{Conclusion}
\label{sec:conclu}

In this paper, we have presented coupling conditions for the Aw-Rascle-Zhang second order traffic flow model at junctions.
The solutions for general 1-to-m~($m \geq 1$) diverges and for a 2-to-1 merge are exhibited.
In the case of the merge, the solver is based on a two-steps construction making use of a priority parameter between  incoming roads.
This priority parameter is fixed and it is independent from the incoming demands.
The main contribution of our work is to present a multi-objective optimization problem of the incoming flows, leading to considering solutions lying on a Pareto front.
It is noteworthy that it is not straightforward to extend our well-posedness result for the 2-to-1 merge to $n$-to-1 merges or to general $n \times m$ junctions, since assumption \ref{A4} might not be sufficient to ensure the uniqueness of the solution. Further, convexity of the set of admissible states is not clear.

The proof of the consistency of our Riemann solver is out of the scope of this paper, since it involves long and cumbersome analysis. It will make the object of future studies.
Future research also includes further numerical computations for the presented Riemann solvers and applications to dynamic traffic management.


\section*{Acknowledgments}

This work is partially supported by DFG grant GO 1920/4-1.

G. Costeseque is thankful to the Scientific Computing Research Group at the University of Mannheim for its hospitality during the preparation of this manuscript.

The authors thank the anonymous referees for indications to improve the paper.

\bibliographystyle{siamplain}
\bibliography{references}

\begin{thebibliography}{10}

\bibitem{AwRascle2000}
{\sc A.~Aw and M.~Rascle}, {\em Resurrection of "second order" models of
  traffic flow}, SIAM Journal on Applied Mathematics, 60 (2000), pp.~916--938,
  \url{http://www.jstor.org/stable/118533}.

\bibitem{bressan2014flows}
{\sc A.~Bressan, S.~{\v{C}}ani{\'c}, M.~Garavello, M.~Herty, and B.~Piccoli},
  {\em Flows on networks: recent results and perspectives}, EMS Surveys in
  Mathematical Sciences, 1 (2014), pp.~47--111.

\bibitem{colombo2010road}
{\sc R.~M. Colombo, P.~Goatin, and B.~Piccoli}, {\em Road networks with phase
  transitions}, Journal of Hyperbolic Differential Equations, 7 (2010),
  pp.~85--106.

\bibitem{monache2016priority}
{\sc M.~L. Delle~Monache, P.~Goatin, and B.~Piccoli}, {\em Priority-based
  riemann solver for traffic flow on networks}, arXiv preprint
  arXiv:1606.07418,  (2016).

\bibitem{fan2017collapsed}
{\sc S.~Fan, Y.~Sun, B.~Piccoli, B.~Seibold, and D.~B. Work}, {\em A collapsed
  generalized {A}w-{R}ascle-{Z}hang model and its model accuracy}, arXiv
  preprint arXiv:1702.03624,  (2017).

\bibitem{garavello2016models}
{\sc M.~Garavello, K.~Han, and B.~Piccoli}, {\em Models for vehicular traffic
  on networks}, vol.~9, American Institute of Mathematical Sciences (AIMS),
  Springfield, MO, 2016.

\bibitem{garavello2016riemann}
{\sc M.~Garavello and F.~Marcellini}, {\em {The {R}iemann Problem at a Junction
  for a Phase Transition Traffic Model}}, Discrete and Continuous Dynamical
  Systems - A, 37 (2017), pp.~5191--5209,
  \url{https://doi.org/10.3934/dcds.2017225}.

\bibitem{Garavello2006}
{\sc M.~Garavello and B.~Piccoli}, {\em Traffic flow on a road network using
  the {A}w-{R}ascle model}, Communications in Partial Differential Equations,
  31 (2006), pp.~243--275.

\bibitem{GaravelloBook2006}
{\sc M.~{Garavello} and B.~{Piccoli}}, {\em {Traffic flow on networks}},
  Springfield, MO: American Institute of Mathematical Sciences (AIMS), 2006.

\bibitem{Greenberg2001}
{\sc J.~M. Greenberg}, {\em Extensions and amplifications of a traffic model of
  {Aw} and {Rascle}}, SIAM Journal on Applied Mathematics, 62 (2001),
  pp.~729--745, \url{http://www.jstor.org/stable/3061785}.

\bibitem{HautBastin2007}
{\sc B.~Haut and G.~Bastin}, {\em A second order model of road junctions in
  fluid models of traffic networks}, Networks and Heterogeneous Media, 2
  (2007), pp.~227--253.

\bibitem{HertyMoutariRascle2006}
{\sc M.~Herty, S.~Moutari, and M.~Rascle}, {\em Optimization criteria for
  modelling intersections of vehicular traffic flow}, Networks and
  Heterogeneous Media, 1 (2006), pp.~275--294.

\bibitem{HertyRascle2006}
{\sc M.~Herty and M.~Rascle}, {\em Coupling conditions for a class of
  second-order models for traffic flow}, SIAM Journal on Mathematical Analysis,
  38 (2006), pp.~595--616, \url{https://doi.org/10.1137/05062617X}.

\bibitem{Jin2003}
{\sc W.~Jin and H.~Zhang}, {\em On the distribution schemes for determining
  flows through a merge}, Transportation Research Part B: Methodological, 37
  (2003), pp.~521 -- 540.

\bibitem{kolb2016capacity}
{\sc O.~Kolb, S.~G{\"o}ttlich, and P.~Goatin}, {\em Capacity drop and traffic
  control for a second order traffic model}, Networks and Heterogeneous Media,
  12 (2017), pp.~663--681, \url{https://doi.org/10.3934/nhm.2017027}.

\bibitem{lebacque2008intersection}
{\sc J.~Lebacque, S.~Mammar, and H.~Haj-Salem}, {\em An intersection model
  based on the {GSOM} model}, in Proceedings of the 17th World Congress, The
  International Federation of Automatic Control, Seoul, Korea, 2008,
  pp.~7148--7153.

\bibitem{lebacque2008modelisation}
{\sc J.~P. Lebacque, X.~Louis, S.~Mammar, B.~Schnetzler, and H.~Haj-Salem},
  {\em Mod{\'e}lisation du trafic autoroutier au second ordre}, Comptes Rendus
  Mathematique, 346 (2008), pp.~1203--1206.

\bibitem{lebacque2007generic}
{\sc J.-P. Lebacque, S.~Mammar, and H.~H. Salem}, {\em Generic second order
  traffic flow modelling}, in Transportation and Traffic Theory 2007. Papers
  Selected for Presentation at ISTTT17, 2007.

\bibitem{LighthillWhitham}
{\sc M.~J. {Lighthill} and G.~B. {Whitham}}, {\em {On Kinematic Waves. II. A
  Theory of Traffic Flow on Long Crowded Roads}}, Royal Society of London
  Proceedings Series A, 229 (1955), pp.~317--345.

\bibitem{marigo2008fluid}
{\sc A.~Marigo and B.~Piccoli}, {\em A fluid dynamic model for {T}-junctions},
  SIAM Journal on Mathematical Analysis, 39 (2008), pp.~2016--2032.

\bibitem{ParzaniBuisson2012}
{\sc C.~Parzani and C.~Buisson}, {\em Second-order model and capacity drop at
  merge}, Transportation Research Record: Journal of the Transportation
  Research Board, 2315 (2012), pp.~25--34,
  \url{https://doi.org/10.3141/2315-03}.

\bibitem{richards1956shock}
{\sc P.~I. Richards}, {\em Shock waves on the highway}, Operations research, 4
  (1956), pp.~42--51.

\bibitem{SiebelMauserMoutariRascle2009}
{\sc F.~Siebel, W.~Mauser, S.~Moutari, and M.~Rascle}, {\em Balanced vehicular
  traffic at a bottleneck}, Mathematical and Computer Modelling, 49 (2009),
  pp.~689 -- 702.

\bibitem{zhang2002non}
{\sc H.~M. Zhang}, {\em A non-equilibrium traffic model devoid of gas-like
  behavior}, Transportation Research Part B: Methodological, 36 (2002),
  pp.~275--290.

\end{thebibliography}

\newpage
\tableofcontents

\end{document}